\theoremstyle{plain}
\newtheorem{theorem}{Theorem}[section]
\newtheorem{lemma}[theorem]{Lemma}
\newtheorem{definition}[theorem]{Definition}
\newtheorem{hypoth}[theorem]{Hypothesis}
\newtheorem*{remark}{Remark}
\title {
    Concentration of the hypergraph's weak independence number
}
\author {\Large
    S.Vakhrushev\footnote{Uppsala University, Uppsala, Sweden; Moscow institute of Physics and Technology, Dolgoprudny}
}
\date{}
\newcounter{image}
\newenvironment{image}[1][]{\refstepcounter{image} 
	\small Fig.~\theimage. #1 \rmfamily}{}
\par\noindent{\it Proof for the first case:}} 
\par\noindent{\it Proof for the second case:}} 
\begin{document}

\maketitle

\renewcommand{\abstractname}{Abstract}
\begin{abstract}
    In this note we generalize the results of the recent work by Tom Bohman and Jacob Hofstad on the independence number in G(n, p) to the case of the random k-uniform hypergraph. The weak independence number is concentrated on two values for $p > n^{-\frac{(k-1)k}{k+1}+\varepsilon}$. This bound is roughly optimal, as such concentration is generally not observed when $p=o\left(n^{-\frac{(k-1)k}{k+1}}\log^{\frac{2}{k+1}} n\right)$.

Bibliography: 17 titles.
\end{abstract}

\vspace{\baselineskip}

{
\textbf{Keywords: } random hypergraph, independence number, concentration, the second moment method.
}

\section{Introduction}
In this paper, we study the limit distribution of the independence number in random hypergraphs.
\subsection{Necessary Definitions from Hypergraph Theory}\label{s1.1}

A \textit{hypergraph} is defined as a pair $H=(V, E)$, where $V=V(H)$ --- a finite set whose elements are called \textit{vertices of the hypergraph}, and $E=E(H)$ --- an arbitrary collection of subsets of $V$, called \textit{edges of the hypergraph} $H$. A hypergraph is called \textit{$k$-uniform} if every edge $A \in E$ has cardinality $k$. For example, a $2$-uniform hypergraph is an ordinary graph without loops and multiple edges.

Let $j$ be a positive integer. A subset of vertices $S \subset V$ in a hypergraph $H = (V, E)$ is called $j$-independent if for any edge $A \in E$, it holds that $|A \cap S| \leq j$, i.e., each edge has at most $j$ common vertices with $S$. The \textit{$j$-independence number} $\alpha_j(H)$ of a hypergraph $H$ is defined as the maximum size of $j$-independent sets in $H$.

Note that if the hypergraph is $k$-uniform, it suffices to consider only $1 \leq j \leq k-1$, since for larger $j$ all subsets of vertices will be independent. When $j = 1$ (an edge has at most one vertex in the independent set), $1$-independent sets in a uniform hypergraph are usually called strong independent sets, and in the case $j = k - 1$ (an edge is not fully contained in the independent set), $(k - 1)$-independent sets are called weak independent sets. For graphs, when $k=2$, these two concepts coincide. In this paper, unless otherwise stated, we always consider independence as weak independence, as is commonly done in hypergraph theory, and denote $\alpha(H)=\alpha_{k-1}(H)$.

In the present paper, we deal with \textit{random hypergraphs} in the classical binomial model $H(n, k,p)$, where $n > k \geq 2$, $n, k \in \mathbb{N}$, and $p\in (0, 1)$. These are obtained by independently including each $k$-subset of a fixed set of $n$ vertices $V_n$ with probability $p$ (Bernoulli scheme on $k$-subsets of $V_n$). More formally, a random hypergraph is a random element taking values in the set of all $k$-uniform hypergraphs on a fixed set $V_n$ of vertices and having the following distribution: for any $H'=(V_n, E')$,
$$\mathbb{P}(H(n, k, p)=H') = p^{|E'|}(1-p)^{{n \choose k} - |E'|}.$$
When $k=2$, the model $H(n, 2, p)$ is well-known as the binomial random graph model $G(n, p)$, to which a vast number of works have been dedicated (see, for example, the classical books~\cite{bollobas, frieze, janson}). In this article, we assume that $k \geq 2$ is fixed, $n$ tends to infinity, and $p = p(n, k)$ depends on both parameters.

We will say that a random variable $X$, defined on a random hypergraph, is \textit{concentrated on $\ell=\ell(n)$ values} if there exists a sequence $t=t(n)$ such that $X \in \{t, t+1, \ldots, t+\ell-1\}$ a.a.s. (here and throughout: asymptotically almost surely).

\vspace{\baselineskip}

For asymptotic analysis, in addition to the standard notations, we will use the following:

    (1) we write $f(n)=\omega(g(n))$ if $g(n)=o(f(n))$ as $n \xrightarrow[]{} \infty$;

    (2) we write $f(n) = \Omega(g(n))$ if $g(n) = O(f(n))$ as $n \xrightarrow[]{} \infty$;

    (3) we write $f(n) \sim g(n)$ if $\lim\limits_{n\xrightarrow[]{} \infty} f(n)/g(n)=1$.
\noindent{}
Also, throughout, by $\log n$ we mean the natural logarithm.
\subsection{Background}
We begin the review of results on the concentration of the independence number with the random graph $G(n, p)$. The first results on this topic date back to the 1970s, when B. Bollobás and P. Erdős~\cite{bol-erd}, as well as D. Matula~\cite{matula}, independently proved the two-point concentration for constant $p \in (0, 1)$. Then, in the late 1980s, A. Frieze~\cite{frieze_asympt}, for the case of decreasing $p=p(n)$, found the asymptotic behavior of the independence number with rather good precision. He showed that for $\omega(1/n) < p < o(1)$, with high probability,
\begin{equation}
\label{frieze_asympt}
    \alpha(G_{n, p}) = \frac{2}{p} (\log (np) - \log \log (np) + \log(e/2) \pm o(1)).
\end{equation}
However, the order of magnitude of the spread of values is quite large. Recently, T. Bohman and J. Hofstad, in their breakthrough work~\cite{main_paper}, proved two-point concentration of the independence number for $p > n^{\frac{-2}{3}+\varepsilon}$. They also showed that their bound is roughly best possible: at least for some $p \leq (\log (n)/n)^{\frac{2}{3}}$, the independence number is concentrated on at least three values.

Note that in the so-called "sparse" \hspace{1pt} case --- for a fixed product $np=c>0$ --- M. Bayati, D. Gamarnik, and P. Tetali~\cite{boyati} proved the existence of some $\gamma(c) \in (0, 1)$ such that

$$\frac{\alpha(G(n, c/n))}{n} \xrightarrow[]{\sf P} \gamma(c) \text{ when } n \xrightarrow[]{} \infty.$$ 
And T. Bohman and J. Hofstad, in the very same work~\cite{main_paper}, proved that the independence number in this regime cannot be concentrated on fewer than $O(\sqrt{n})$ values.

Finally, quite recently, T. Bohman and J. Hofstad posted on the arXiv a preprint~\cite{recent_paper} with a result on the concentration of the independence number in the uniform random graph model $G(n, m)$: a random element taking values uniformly on the set of all graphs with $n$ vertices and $m$ edges. It is well known that many properties in the uniform and binomial models are inherited under the natural substitution $p=m/{n \choose 2}$. It turns out that the property of two-point concentration of the independence number differs in these models: already for $m > n^{5/4+\varepsilon}$, it holds that $\alpha(G(n, m))$ is concentrated on two values.

\vspace{\baselineskip}
The study of the behavior of the independence number for hypergraphs has mainly been associated with the study of chromatic numbers and has been studied since the 1980s. In the works of J. Schmidt, E. Shamir and co-authors~\cite{shmidt1, shmidt2, shamir}, some estimates of the $j$-independence numbers of $H(n, k, p)$ were established to obtain asymptotics for the $j$-chromatic numbers of random graphs. The most comprehensive study of this question was conducted by M. Krivelevich and B. Sudakov~\cite{sudakov}, who established more precise estimates of the $j$-independence numbers, which allowed them to determine their asymptotic growth:

\begin{theorem}[(Consequence of {~\cite[Consequence 1]{sudakov}})]
    Fix $k$ and $1 \leq j \leq k-1$ and let $p=\omega\left(n^{1-k}\right)$ and simultaneously $p=o\left(n^{j+1-k}\right)$. Then a.a.s.
    \begin{equation}
    \label{asymptotic_base}
    \alpha_j(H(n, k, p)) \sim n^{1-\frac{k-1}{j}} \left((j+1) (j-1)! (k-1)(k-1-j)! \frac{\log \left(np^{1/(k-1)}\right)}{p}\right)^{\frac{1}{j}},
    \end{equation}
    and, in particular,
    \begin{equation}
    \label{asymptotic}
    \alpha(H(n, k, p))=\alpha_{k-1}(H(n, k, p))=\left(\frac{k! \log \left(np^{1/(k-1)}\right)}{p}\right)^{\frac{1}{k-1}} (1+o(1)).
    \end{equation}
\end{theorem}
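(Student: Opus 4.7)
Теорема утверждает асимптотическое равенство, поэтому я бы получал совпадающие (с точностью до $1+o(1)$) верхнюю и нижнюю оценки стандартным методом моментов: первый момент для верхней, второй момент для нижней.

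\textbf{Верхняя оценка (первый момент).} Подмножество $W \subset V_n$ мощности $m$ является $j$-независимым в $H(n, k, p)$ тогда и только тогда, когда ни одно ``запрещённое'' $k$-подмножество (т.е. $A \subset V_n$ с $|A| = k$ и $|A \cap W| \geq j+1$) не является ребром. Их количество равно
$$N(m) = \sum_{i=j+1}^{k} \binom{m}{i}\binom{n-m}{k-i},$$
и при $m = o(n)$ доминирует слагаемое $i = j+1$, дающее $N(m) \sim \frac{m^{j+1} n^{k-j-1}}{(j+1)!\,(k-j-1)!}$. Ввиду независимости рёбер $\mathbb{E}[X_m] = \binom{n}{m}(1-p)^{N(m)}$, где $X_m$ --- число $j$-независимых множеств размера $m$. Логарифмируя (и пользуясь $\log(1-p) \sim -p$, корректным в силу $p = o(n^{j+1-k}) = o(1)$), уравнение $\log \mathbb{E}[X_m] = 0$ сводится к $m^j \sim \frac{(j+1)!\,(k-j-1)! \log(n/m)}{p\, n^{k-j-1}}$. Ключевой алгебраический приём: при подстановке искомого степенного закона $m_* \sim n^{1-(k-1)/j}\,(\cdot)^{1/j}$ проверяется тождество $\log(n/m_*) \sim \frac{k-1}{j}\log(np^{1/(k-1)})$, после чего константа переписывается ровно как $(j+1)(j-1)!(k-1)(k-1-j)!$ из (\ref{asymptotic_base}). Для $m = (1+\varepsilon)m_*$ имеем $\mathbb{E}[X_m] \to 0$, и $\alpha_j < m$ а.п.н.\ по неравенству Маркова.

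\textbf{Нижняя оценка (второй момент).} Для $m = (1-\varepsilon)m_*$ применяю неравенство Пэли--Зигмунда $\mathbb{P}(X_m \geq 1) \geq \mathbb{E}[X_m]^2/\mathbb{E}[X_m^2]$. Группируя пары $(W, W')$ по размеру пересечения $t = |W \cap W'|$, записываю
$$\mathbb{E}[X_m^2] = \binom{n}{m}\sum_{t=0}^{m}\binom{m}{t}\binom{n-m}{m-t}(1-p)^{2N(m) - N_{\cap}(m, t)},$$
где $N_{\cap}(m, t)$ --- число запрещённых $k$-подмножеств, общих для $W$ и $W'$. Остаётся показать, что доминирующий вклад в сумму даёт окрестность типичного пересечения $t \approx m^2/n$ и что именно там $\mathbb{E}[X_m^2] = (1 + o(1))\mathbb{E}[X_m]^2$.

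\textbf{Главная трудность.} Основная техническая работа сосредоточена именно во втором моменте. Каждое общее ``плохое'' $k$-подмножество $A$ характеризуется тройкой $(|A \cap (W \setminus W')|, |A \cap (W \cap W')|, |A \cap (W' \setminus W)|)$ с условиями $|A \cap W|, |A \cap W'| \geq j+1$; перечисление таких троек превращает $N_{\cap}(m, t)$ в громоздкую полиномиальную сумму, которую надо аккуратно максимизировать. Далее диапазон $t \in [0, m]$ стандартно делится на три подынтервала (малые, средние и большие $t$), и в каждом из них доказывается, что вклад в $\mathbb{E}[X_m^2]$ не превосходит $(1+o(1))\mathbb{E}[X_m]^2$. В отличие от графного случая ($k=2$), здесь запрет на ребро --- это уже не простое включение/невключение рёбер, а условие на все $(j+1)$-подмножества внутри каждого $k$-ребра, что значительно усложняет комбинаторику и является основным источником технической сложности по сравнению с результатом Т.~Бомана и Я.~Хофстада.
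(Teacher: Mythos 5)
The paper does not prove this theorem. As the theorem header itself indicates (``Следствие из~\cite[следствие 1]{sudakov}''), the statement is quoted as a consequence of Krivelevich and Sudakov's Corollary~1 in~\cite{sudakov}, and no argument for it appears anywhere in the text. There is therefore no ``paper's own proof'' to compare your attempt against.

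On its own terms, your first-moment upper bound is set up correctly: the dominance of the $i=j+1$ term in $N(m)$ when $m=o(n)$, the identity $\log(n/m_*) \sim \frac{k-1}{j}\log\bigl(np^{1/(k-1)}\bigr)$, and the constant rearrangement $(j+1)!\,(k-j-1)!\cdot\frac{k-1}{j} = (j+1)(j-1)!(k-1)(k-1-j)!$ all check out, and the resulting $m_*$ matches~\eqref{asymptotic_base}. The lower bound, however, is a plan rather than a proof: you correctly identify the second-moment estimate as the crux, but you do not carry it out, and that is where essentially all of the content lies. Whether a direct variance bound on $X_m$ at $m=(1-\varepsilon)m_*$ closes the gap uniformly over the range $\omega(n^{1-k}) < p < o(n^{j+1-k})$ is exactly the nontrivial part of the Krivelevich--Sudakov argument; the contribution of pairs $(W,W')$ with intersection close to $m$ is delicate precisely when $\mathbb{E}[X_m]$ is not superpolynomially large, which is one reason this paper has to introduce augmented independent sets even for the coarser task of two-point concentration. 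As a sketch of the standard first/second-moment route this is reasonable, but the step you flag as ``the main difficulty'' remains unaddressed, and the statement cannot be considered proved without it.
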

Note, however, that the spread of values in the original estimates~\cite{sudakov} is quite large compared to A. Frieze's formula (\ref{frieze_asympt}) for random graphs. In this regard, it is not surprising that the question of concentration of independence numbers had not been raised in the literature until recently. In the works of A. Semenov and D. Shabanov~\cite{semenov1, semenov2}, the asymptotic behavior of $\alpha_j(H(n, k, p))$ was investigated in the sparse case, where $p = c/ {{n-1} \choose {k-1}}$ and the parameter $c > 0$ depends only on $k$ and $j$. They found the limit of convergence in probability of $\alpha_j(H(n, k, p))/n$ for $c \leq 1 / (k-1)$, and also proved the existence of such a limit for $j = k-1$ and arbitrary $c$.

Recently, I. Denisov and D. Shabanov proved~\cite{denisov} the concentration of the weak independence number $\alpha(H(n, k, p))$ on two values for constant $p \in (0, 1)$. They also proved a result on the concentration of the $j$-independence number of $H(n, k, p)$ on two values in the non-trivial regime $\Omega(n^{j+1-k}) \leq p \leq o\left(n^{j+1-k} \log n\right)$, and proved concentration on a logarithmic number of values for slightly smaller $p$. Their proof uses the second moment method and follows the proof scheme of B. Bollobás and P. Erdős~\cite{bol-erd} for random graphs $G(n, p)$ with constant $p$.

\section{New results}
\subsection{Statement of the main theorems}
The main result of this work is the following
\begin{theorem}
\label{th:main_res}
    Let $p \geq n^{-\frac{(k-1)k}{k+1} + \varepsilon}$ for any fixed $\varepsilon > 0$. Then $\alpha(H(n, k, p))$ is concentrated on two values.
\end{theorem}

Thus, we generalize the result of T. Bohman and J. Hofstad~\cite{main_paper} for the weak independence number of a random hypergraph. Two-point concentration is observed for a wide range of values of $p$, which improves all previously known results. We also show that the lower bound on $p$ is almost optimal, using the technology of A. Sah and M. Sawhney:

\begin{theorem}
\label{th:antic}
    Let $\omega\left(n^{-(k-1)}\right) < p < \left(\frac{(k! \log (n))^{1/(k-1)}}{6(k+1)^{1/(k-1)}n^{k/2}}\right)^{2(k-1)/(k+1)}$ and set $$\ell = \ell(n, k, p) := \frac{n^{-k/2}p^{-1/2-1/(k-1)} \left(k! \ln (np^{1/(k-1)})\right)^{1/(k-1)}}{3}.$$ Then there exists $q = q(n)$ such that $p \leq q \leq 2p$ such that $\alpha(H(n, k, q))$ is not concentrated on $\ell$ values.
\end{theorem}
However, note that this anti-concentration result is somewhat weak since it does imply that there are no such functions $p(n, k) < n^{-\frac{(k-1)k}{k+1}}$ for which the 2-point concentration holds. Furthermore, it should be noted that in a recent work~\cite{recent_paper}, the absence of such functions for random graphs (i.e., when $k=2$) was already shown by analyzing the concentration of the independence number in $G(n, m)$.

Incidentally, as a corollary of the proof of Theorem~\ref{th:main_res}, a rather accurate asymptotic was obtained, similar to A. Frieze's formula (\ref{frieze_asympt}). To define it we, as in~\cite{denisov}, introduce a function

\begin{equation}
\nonumber
    f(x) = f_k(x) := x(x-1)(x-2)\ldots (x-k+2),
\end{equation}
which is continuous and strictly increasing for $x > k-2$, and therefore one can define an increasing inverse function $f^{-1}(y)$. For further asymptotic analysis, the following obvious observation will be important to us:

\begin{equation}
\nonumber
    f^{-1}(y) \sim \sqrt[k-1]{y} \quad \text{ as } \quad y\xrightarrow[]{} \infty.
\end{equation}
Also throughout this work, for the sake of brevity, we will use the notation
$$d=d(n, k):=np^{\frac{1}{k-1}}.$$
\begin{theorem}
\label{th:asympt}
Let $n^{-\frac{(k-1)k}{k+1}+\varepsilon} \leq p = o(1)$. Then whp
    \begin{equation}
    \alpha(H(n, k, p)) = f^{-1} \left(\frac{k![\log d - \log (\log d)^{\frac{1}{k-1}} + \log \frac{e}{(k!)^{1/(k-1)}} + o(1)]}{p} \right).
\end{equation}
\end{theorem}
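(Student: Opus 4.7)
The plan is to identify $\alpha(H(n,k,p))$ with the first--moment threshold for weak independent sets and then solve that threshold asymptotically. Let $X_r$ denote the number of weak independent $r$-sets, so $\mathbb{E}[X_r]=\binom{n}{r}(1-p)^{\binom{r}{k}}$. Markov's inequality gives $\alpha<r$ a.a.s.\ as soon as $\mathbb{E}[X_r]=o(1)$, so $\alpha$ is bounded above by the largest integer $r^*$ with $\mathbb{E}[X_{r^*}]\ge 1$. For the matching lower bound I would reuse the second--moment computation that the proof of Theorem~\ref{th:main_res} is built on: that argument pins $\alpha$ down to within an additive $O(1)$ of the same critical $r^*$, which is precisely the source of the two--value concentration. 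Thus up to a uniformly bounded additive error, $\alpha=r^*$ a.a.s.

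The second step is to solve for $r^*$ explicitly. Using Stirling in the form $\log\binom{n}{r}=r\log(n/r)+r+O(\log r)$, the expansion $\log(1-p)=-p(1+O(p))$, and the identity $\binom{r}{k}=\tfrac{(r-k+1)f(r)}{k!}=\tfrac{rf(r)}{k!}(1+O(1/r))$, the equation $\log\mathbb{E}[X_{r^*}]=o(r^*)$ reduces after dividing by $r^*$ to
$$f(r^*)=\frac{k!}{p}\bigl(\log(n/r^*)+1+o(1)\bigr).$$
The self--reference in $\log r^*$ on the right hand side is removed by substituting the coarse estimate $r^*\sim(k!\log d/p)^{1/(k-1)}$ from~\eqref{asymptotic}: this gives $\log r^*=\tfrac{1}{k-1}\log(k!\log d)-\tfrac{1}{k-1}\log p+o(1)$, and hence
$$\log(n/r^*)+1=\log d-\log(\log d)^{1/(k-1)}+\log\frac{e}{(k!)^{1/(k-1)}}+o(1).$$
Applying the increasing inverse $f^{-1}$ to the main equation yields the claimed formula; for $k=2$, since $f(r)=r$, this reproduces Frieze's asymptotic~\eqref{frieze_asympt}.

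The main technical point will be verifying that all error terms genuinely collapse to the single $o(1)$ inside $f^{-1}$, uniformly for $p$ in the allowed range $n^{-(k-1)k/(k+1)+\varepsilon}\le p=o(1)$. The critical input is the harmless nature of Theorem~\ref{th:main_res}'s two--value indeterminacy: an $O(1)$ change in $r$ moves $\log(n/r)$ by $O(1/r)=o(1)$, which matches the accuracy already built into the equation, so both integers guaranteed by Theorem~\ref{th:main_res} satisfy the formula with the same $o(1)$. The remaining bookkeeping --- the Stirling remainder, the $\log(1-p)=-p-O(p^2)$ remainder (largest when $p$ is largest), and the $1-(k-1)/r$ factor in $\binom{r}{k}$ --- produces lower order corrections that need only be checked to sit inside the same $o(1)$.
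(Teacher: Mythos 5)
There is a genuine gap. You assert that the second-moment argument behind Theorem~\ref{th:main_res} ``pins $\alpha$ down to within an additive $O(1)$ of the same critical $r^*$'', where $r^*$ is the largest integer with $\mathbb{E}[X_{r^*}]\ge 1$ (a quantity that differs from the paper's $s_x$ by only $O(1)$). This is false, and it is in fact the entire raison d'\^etre of the augmented independent sets. The paper's Theorem~\ref{th:main_tech} concentrates $\alpha$ on $\{s_z,s_z+1\}$, where $s_z$ is a \emph{different} threshold defined through the expectation $E(n,k,s,r)$ of augmented sets, and for $p=o\bigl(\log(n)\,n^{-(k-1)^2/k}\bigr)$ the discrepancy satisfies
$$s_x-s_z\ \sim\ \frac{s_x^{k}}{e^{k}n^{k-1}},$$
which for $p$ near $n^{-(k-1)k/(k+1)+\varepsilon}$ is of order $n^{1/(k+1)-O(\varepsilon)}$, i.e.\ polynomially large. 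So $\alpha$ is not within $O(1)$ of $r^*$, and the sentence on which you hang the whole lower-bound step does not hold.

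Your final formula nonetheless comes out right, but for a reason you never verify: the paper's Lemma~\ref{lm:s_diff} shows $s_x-s_z=O(s_x^k/n^{k-1})=o(s_x^{1/k})$, which is small enough that replacing $s_x$ by $s_z$ changes the quantity $p f(\cdot)/k!$ by only $o(1)$, so both thresholds satisfy the same asymptotic~\eqref{main_fm}. This estimate is the actual content needed, and it cannot be obtained from a pure first-moment computation such as yours --- it requires the augmented-set expectation $E(n,k,s,r)$, the ratio estimates~\eqref{E_ratio2} and~\eqref{E_ratio_s}, and the analysis of $r_M(s)$. In short: your algebraic extraction of the threshold from $\log\mathbb{E}[X_r]=o(r)$ is fine and matches the paper's~\eqref{1_minus_p}, but the reduction of the theorem to solving that equation is unjustified; you would need to import the quantitative comparison $s_x-s_z=o(s_x^{1/k})$ from Lemma~\ref{lm:s_diff} (equivalently, the second and third cases of Theorem~\ref{th:main_tech}) rather than the false ``$O(1)$'' claim.
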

As one can see, the spread of values in this formula has order $\sqrt[k-1]{1/p}$, which significantly refines the result of M. Krivelevich and B. Sudakov (\ref{asymptotic}).

\subsection{Proof technology}
Our work fully follows the scheme of T. Bohman and J. Hofstad's work~\cite{main_paper}. The main tool for proving two-point concentration is the application of the second moment method.

It is natural to consider the number of independence sets of size $s$ in $H(n,k, p)$. Let us denote this value by $X_s$. Clearly,
$$\mathbb{E} [X_s] = {n \choose s} (1-p)^{s\choose k}.$$
If $k$ is large enough, then the expectation of $X_S$ tends to $0$ and with high probability there are no independent sets of size $s$ in $H(n, k, p)$. This gives a simple upper bound for $\alpha(H(n, k, p))$. It turns out that this upper bound is sufficient for proving two-point concentration for $p > n^{-\frac{(k-1)^2}{k}}$. However, for smaller $p$, this upper bound becomes weak, since the independence number is actually already concentrated on values smaller than the largest $s$ for which $\mathbb{E} [X_s] = \omega(1)$.

\vspace{\baselineskip}

For the range $n^{-\frac{(k-1)k}{k+1}} < p < n^{-\frac{(k-1)^2}{k}}$, T. Bohman and J. Hofstad in~\cite{main_paper} introduced a more general object --- \textit{augmented independent sets} --- which became the main tool for addressing the gap between the expectation threshold and the true independence number in $G(n, p)$, allowing for an improvement of the upper bound. We present a generalization of their definition for hypergraphs:

\begin{definition}
    An augmented independent set of order $s$ in a $k$-uniform hypergraph $H$ is a set of vertices $S$ in $H$ such that for some integer $r \geq 0$ it satisfies the following conditions:
    \begin{itemize}
        \item $|S| = s+r$;
        \item induced hypergraph $H[S]$ is a $k$-matching with $r$ edges (i.e. contain exactly $r$ disjoint edges of cardinality $k$ each);
    \item for any vertex $v \in V(H) \backslash S$, there exist at least two edges containing $v$ within $S \cup \{v\}$.
    \end{itemize}
\end{definition}
The primary motivation for this definition is that each such structure contains exactly $k^r$ independent sets of size $s$, and therefore, considering the number of augmented independent sets of order $s$ instead of ordinary independent sets potentially significantly reduces the variance. Moreover, the third condition is natural in the sense of maximality by inclusion (which allows us to consider more independent sets of size $s$). Indeed, suppose that from some vertex $v \in V(H) \backslash S$, there is at most one edge $e \subset S \cup \{v\}$ to $S$. If such a potential edge does not intersect the edges of a $k$-matching within $S$, then we can increase the size of $S$ by adding vertex $v$. If such an edge intersects the existing edges of the $k$-matching at any vertex $u$, then the set $S \backslash \{u\} \cup \{v\}$ contains an independent set of size $s+1$.

Thus, the given object is an ``extension'' of the notion of \textit{maximal independent sets}, allowing us to account for a large number of independent sets of a fixed size. A key fact, analogous to ~\cite[Lemma 4]{main_paper}, on which the proof of the refined upper bound relies, is

\begin{lemma}
\label{lm:idea}
    For any $k$-uniform hypergraph $H$, the independence number $\alpha(H)$ is equal to $s$ if and only if $H$ contains an augmented independent set of order $s$, but does not contain augmented independent sets of a larger order.
\end{lemma}

\begin{proof}
    Let us denote by $\hat{\alpha}(H)$ the largest order of an augmented independent set. Since any augmented independent set of order $s$ contains an independent set of size $s$, then
    $$\hat{\alpha}(H) \leq \alpha(H).$$
    Now, suppose that $H$ contains a maximum independent set $S$ of order $s$. Let $T$ be a maximal set of vertices containing $S$ such that the induced hypergraph on $T$ is a $k$-matching. We show that $T$ is an augmented independent set of order $s$:
    
    Obviously, from any vertex outside $T$, there is at least one edge into $T$ that intersects the edges of the $k$-matching, due to the maximality of $T$. Suppose that for some vertex $v\in V(H) \backslash T$, there is exactly one such edge. Let the intersection of this edge with the edges of the $k$-matching occur at vertices $\{u_1, \ldots, u_t\}$, where $1 \leq t \leq k-1$. Then $T \backslash \{u_1, \ldots, u_t\} \cup \{v\}$ contains an independent set of larger size than $S$, which contradicts the maximality of $S$. Thus, from each vertex outside $T$, there are at least two edges into $T$, which proves that

    $$\alpha(H) \leq \hat{\alpha}(H).$$
    The proof of Lemma~\ref{lm:idea} is finsihed.
\end{proof}

We will prove the concentration of the independence number by applying the second moment method to the random variable $Z_{s, r}$, which counts the number of augmented independent sets of order $s$ with $r$ edges of the $k$-matching inside. The parameters $s$ and $r$ will be chosen optimally. The bottleneck, where the lower bound on $p$ in Theorem~\eqref{th:main_res} is used, is the consideration of pairs of augmented sets with an intersection of approximately $p^{1/k}$ vertices.

\vspace{\baselineskip}

To prove Theorem~\ref{th:antic} regarding the absence of concentration for values of $p$ smaller than in Theorem~\ref{th:main_res}, the argument of A. Sah and M. Sawhney described in~\cite[Section 4]{main_paper} is used. It shows that two-point concentration cannot be generalized for $p=n^{\gamma}$, where $\gamma \leq \frac{(k-1)k}{k+1}$. This approach is very similar to the one in~\cite{anti_approach} (see the last paragraph of the last section) and relies on the normal approximation of the distribution of the number of edges in $H(n, k, p)$ combined with the observation that the property of containing an independent set of a given size is decreasing. This implies an intersection of the concentration intervals of the independence numbers of random $H(n, k, p)$ and $H(n, k, p+\delta)$ for an appropriately chosen step $\delta$.

\section{Structure of the Proof of Theorem~\ref{th:main_res}}

We will only consider the case $p=o(1)$ since for constant $p$ the result of Theorem~\ref{th:main_res} is known from~\cite{denisov}. We define the ``expectation threshold'' $s_x$ as the smallest integer $s$ such that

$$\mathbb{E} [X_s] = {n \choose s} (1-p)^{s\choose k} > n^{2\varepsilon}.$$
Note that the exact value of the exponent of the ``cut-off function'' on the right-hand side is not critical: we only need it to be small (specifically, we need $\mathbb{E}[X_{s_x +2}] = o(1)$) but greater than the exponent in the polynomial function we use to define the variable $s_z$ in Section~\ref{sc:tech}.

In this work, we will deal with values of $s$ satisfying the following asymptotic relation:
\begin{equation}
\label{main_fm}
    s =s(n, p) =  f^{-1} \left(\frac{k![\log d - \log (\log d)^{\frac{1}{k-1}} + \log \frac{e}{(k!)^{1/(k-1)}} + o(1)]}{p} \right).
\end{equation}
Further technical calculations in this paper are mainly based on the asymptotic properties of functions of $s$ having such an order of growth (see Section~\ref{sc:tech}). It is easy to see that $s_x$ can be represented in this form (the influence of $2\varepsilon$ in the definition of $s_x$ is absorbed by the $o(1)$ term in this expression).

\vspace{\baselineskip}

The main technical result in the proof of concentration is the following:
\begin{theorem}
\label{th:main_tech}
    If $n^{-\frac{(k-1)k}{k+1} + \varepsilon} < p < \log^{-(k-1)} n$ for some $\varepsilon > 0$, then there exists an integer $s_z = s_z(n, k)$ such that $\alpha(H(n, k, p)) \in \{s_z, s_z+1\}$ a.a.s.. Furthermore, we have
    $$s_x - s_z \begin{cases}
        = \text{ } 0,\text{ if } p=\omega\left(\log (n)n^{-\frac{(k-1)^2}{k}}\right) \\
        = \xi_n, \text{ if } p=C\log(n)n^{-\frac{(k-1)^2}{k}} \\
        \sim \frac{s_x^k}{e^kn^{k-1}} \sim \frac{(k! \log d)^{\frac{k}{k-1}}}{e^kp^{\frac{k}{k-1}}n^{k-1}},\text{ if } p=o\left(\log (n)n^{-\frac{(k-1)^2}{k}}\right)
    \end{cases}$$
    where $\xi_n \in \left(\frac{((k-1)!)^{\frac{k}{k-1}}}{C^{\frac{k}{k-1}}e^k} - \frac{k}{k-1} - \frac{1}{2}, \frac{((k-1)!)^{\frac{k}{k-1}}}{C^{\frac{k}{k-1}}e^k} + \frac{3}{2}  \right)$.
\end{theorem}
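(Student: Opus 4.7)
The strategy is to combine a first-moment upper bound showing that $H(n,k,p)$ contains no augmented independent set of order $s_z+2$ a.a.s.\ with a second-moment lower bound producing an augmented independent set of order $s_z$ a.a.s. In both halves the transfer between augmented structures and the independence number is supplied by Lemma~\ref{lm:idea}. Throughout, I parametrize by $Z_{s,r}$, the number of augmented independent sets of order $s$ with $r$ matched edges, and select $r=r(n,k,p)$ so as to minimize the variance-to-mean-squared ratio near $s=s_x$.

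Computing $\mathbb{E}[Z_{s,r}]$ amounts to multiplying the count of ways to place $r$ labelled matched edges on $s+r$ vertices by the Bernoulli probability $p^{r}(1-p)^{\binom{s+r}{k}-r}$, with a $1-o(1)$ correction from the augmentation condition on external vertices. Comparing with $\mathbb{E}[X_s]=\binom{n}{s}(1-p)^{\binom{s}{k}}$, one finds that the ``profitable'' choice of $r$ is essentially the predicted gap $s_x-s_z$. The threshold $p=\Theta(\log(n)\,n^{-(k-1)^{2}/k})$ is exactly where $r=0$ ceases to be optimal: below it, each additional matched edge multiplies the expectation by a factor $>1$. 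I then define $s_z$ as the largest integer $s$ with $\mathbb{E}[Z_{s+2,r}]\to 0$, and Markov's inequality delivers $\mathbb{P}(\alpha\geq s_z+2)=o(1)$. The three-regime behavior of $s_x-s_z$ in the statement falls out of this comparison, together with the asymptotic analysis of \eqref{main_fm} promised in Section~\ref{sc:tech}.

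For the lower bound, by construction $\mathbb{E}[Z_{s_z,r}]\to\infty$. The core computation is
$$\mathbb{E}[Z_{s_z,r}^{2}]=\sum_{t,\tau}\mathbb{P}(A,B\text{ are augmented independent sets}),$$
the outer sum ranging over the vertex-overlap $t=|A\cap B|$ and a finer statistic $\tau$ that records how the shared vertices meet the matched edges of each of $A$ and $B$. One decomposes by these classes; the $t$-small regime reproduces $\mathbb{E}[Z_{s_z,r}]^{2}$, and all other regimes must contribute $o(\mathbb{E}[Z_{s_z,r}]^{2})$. Chebyshev's inequality then gives $\mathbb{P}(Z_{s_z,r}=0)=o(1)$, and Lemma~\ref{lm:idea} yields $\alpha\geq s_z$ a.a.s.

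The main obstacle, and the step that consumes the hypothesis on $p$, is the second-moment estimation in the bottleneck range $t\asymp p^{1/k}\,s_z$ identified in the introduction. Unlike the graph case in~\cite{main_paper}, in the $k$-uniform setting one must simultaneously control (i) the number of shared vertices, (ii) the number of matched edges of $A$ and of $B$ hit by the overlap (which affects the combinatorial factor $(k!)^{r}r!$), and (iii) the resulting loss in the free-edge exponent $\binom{s+r}{k}-r$. Here two competing exponentials nearly cancel, and extracting the exponent $\tfrac{(k-1)k}{k+1}$ from this balance---making use of the $n^{c\varepsilon}$ slack supplied by the hypothesis $p\geq n^{-(k-1)k/(k+1)+\varepsilon}$---is the principal technical content of the proof.
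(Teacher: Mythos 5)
Your plan correctly mirrors the paper's high-level architecture: Lemma~\ref{lm:idea} translates the independence number into the language of augmented independent sets, a first-moment bound rules out large augmented sets, and a second-moment argument on $Z_{s_z,r_z}$ produces one of order $s_z$. However, as a proof it is only a sketch, and it contains several concrete assertions that are wrong and would mislead a detailed writeup.

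First, the optimal $r_z$ is not ``essentially the predicted gap $s_x-s_z$'': Lemma~\ref{lm:r_z} together with the third regime of the statement gives $r_z\sim\log(d)\cdot(s_x-s_z)$, larger by a full logarithmic factor. Second, you conflate two distinct thresholds: $s_z$ departs from $s_x$ at $p=\Theta\left(\log(n)\,n^{-(k-1)^2/k}\right)$ (Lemma~\ref{lm:s_diff}), whereas $r_z$ first becomes positive at the larger scale $p=\Theta\left(\log(n)^{(2k-1)/k}\,n^{-(k-1)^2/k}\right)$; in between the augmentation already reduces variance even though $s_z=s_x$. Third, defining $s_z$ via ``$\mathbb{E}[Z_{s+2,r}]\to 0$'' for a single $r$ and invoking Markov does not rule out $\alpha\geq s_z+2$, because an augmented set of that order can carry any number $r$ of matched edges; one must show $\sum_{s=s_z+2}^{s_x+1}\sum_{r}E(n,k,s,r)=o(1)$ (which the paper obtains via Lemma~\ref{lm:E_sum} and geometric decay in $s$) and, separately, $\mathbb{E}[X_{s_x+2}]=o(1)$ to cap orders above $s_x+1$. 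Fourth, the bottleneck overlap in the second moment is $i\asymp B=(p\log n)^{-1/k}$ (Section~\ref{sc:second_moment}), not $t\asymp p^{1/k}s_z$; for $k\geq3$ these differ by a polynomial power of $p$, so your balance would fail. Finally, the explicit three-regime formula for $s_x-s_z$ --- the quantitative content of the theorem --- is asserted to ``fall out'' but never derived; it requires the expectation-ratio computations of Section~\ref{sc:tech}.
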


Let us make a few remarks regarding the results of this theorem and its corollaries:
\begin{itemize}
    \item This result is a direct generalisation of the main technical result by T.Bohman and J.Hofstad~\cite{main_paper}[Theorem 3].
    
    \item We emphasize that in the case $p=C\log(n)n^{-\frac{(k-1)^2}{k}}$, the sequence $\xi_n$ does not converge to a specific value. Moreover, it varies within a specified interval. In fact, $\xi_n$ can be defined more precisely by the following expression

$$\left\lceil \frac{((k-1)!)^{\frac{k}{k-1}}}{C^{\frac{k}{k-1}}e^k} - \frac{k \log \mathbb{E} [X_{s_x}]}{(k-1)\log n} + \frac{k}{k-1}\varepsilon \pm o(1)\right\rceil.$$
Since $\mathbb{E} [X_{s_x}]$ can take a wide range of values as $n$ varies, we have persistent variation of $\xi_n$ over
a small list of values.

    \item As can be seen, for $p > n^{-\frac{(k-1)^2}{k}+\varepsilon}$ a simple upper bound $s_x$ given by the first moment method is sufficient. We show in Section~\ref{sc: simple_sec} that in this case there is no need to consider augmented independent sets: it is enough to apply the second moment method to \textit{maximal} independent sets.

    \item The upper bound on $p$ is not essential and is taken only for convenience when working with asymptotics, which have a simpler form in this regime. The concentration of the independence number for larger $p$, close to a constant, follows from Theorem~\ref{th:simple_sec} from Section~\ref{sc: simple_sec}. Thus, Theorem~\ref{th:main_res} is a consequence of the theorem above and Theorem~\ref{th:simple_sec}.

    \item Given this theorem, it is easy to verify that $s_z$ also satisfies formula~\eqref{main_fm} (see Lemma~\ref{lm:s_diff}). This implies the result of Theorem~\ref{th:asympt} (and for $p>\log^{-(k-1)}n$, the result follows from Theorem~\ref{th:simple_sec}): throughout this work, we only deal with potential independent sets whose size is of the form~\eqref{main_fm}.

\end{itemize}
\vspace{\baselineskip}

\textbf{Organization of the paper.}
The following three sections provide the proof of the main Theorem~\ref{th:main_tech} above: Section~\ref{sc:tech} presents the necessary technical estimates for asymptotics related to $s$ as well as calculations for the optimal values of parameters $s=s_z, r$ for the quantity $Z_{s, r}$; Sections~\ref{sc:first_moment} and~\ref{sc:second_moment} prove the upper and lower bounds of Theorem~\ref{th:main_tech} respectively. Section~\ref{sc: simple_sec} demonstrates how to prove the concentration of the independence number for sufficiently large $p$ by considering only maximal independent sets: we included this a) for completeness and b) to cover $p$ values close to a constant regime. Section~\ref{sc:anti} provides the proof of the absence of concentration for small $p$ --- Theorem~\ref{th:antic}. Finally, Section~\ref{sc:further} is devoted to discussing further questions.

\section{Preliminary calculations}
\label{sc:tech}
Throughout this and the next two sections, we assume that $p$ satisfies the conditions in Theorem~\ref{th:main_tech}.
\subsection{Binomial coefficients relations} We will quite often use the following equality

\begin{equation}
\label{binom_diff}
    {t+1 \choose k} - {t \choose k} = \frac{f(t)}{(k-1)!}
\end{equation}
which holds for any pair of natural $t\geq k$. Also, when applying the second moment method to consider pairs of augmented sets with a large intersection, we will need the following technical lower bound for the difference between two binomial coefficients:
\begin{lemma}
\label{lm:bin_coeff_diff}
    Let $\delta > 0$ and for two natural numbers $t \geq r$, the inequality $f(r) \geq (1-\delta)f(t)$ holds. Then, if $t$ is sufficiently large,
    $${t \choose k} - {r \choose k} \geq (t-r)(1-(1+1/k)\delta)\frac{f(t-1)}{(k-1)!}.$$
\end{lemma}
\begin{proof}
    We write the numerator of the binomial coefficient as a polynomial:
    $$h(x) := x(x-1)\ldots (x-k+1) = x^{k}+a_{k-1}x^{k-1}+\ldots+a_0.$$
    It is clear that
    $${t \choose k} - {r \choose k} = (t-r)\frac{\left(\sum\limits_{i=0}^{k-1}t^{k-1-i}r^i\right) + a_{k-1}\left(\sum\limits_{i=0}^{k-2}t^{k-2-i}r^i\right)+\ldots+a_1}{k!}.$$
   Note that due to our condition, for sufficiently large $t$, we can state that $r^j \geq (1-(1+1/k)\delta)t^j$ for all $1\leq j \leq k-1$. Therefore, we can easily estimate each individual sum in the expression above and obtain:
    \begin{align*}
        {t \choose k} - {r \choose k} &\geq (t-r)(1-(1+1/k)\delta)\frac{kt^{k-1}+a_{k-1}(k-1)t^{k-2}+\ldots+a_1}{k!} \\
        &=(t-r)(1-(1+1/k)\delta)\frac{h'(t)}{k!}.
    \end{align*}
   Finally, from the form of the function $h(x)$ above, it follows that $h'(t) \geq kf(t-1)$, which completes the proof.

\end{proof}

\subsection{Asymptotics for values of $s=s(n)$ that are of the form~\eqref{main_fm}}

First, we note that all asymptotics below hold in particular for values of $s$ close to $s_x$. Obviously, $s \xrightarrow{} \infty$ and $s=o(n)$. The main idea in defining the asymptotic relation~~\eqref{main_fm} was to satisfy the following property:
\begin{align*}
\frac{ne}{s} (1-p)^{\frac{f(s)}{k!}} &= (1+o(1))\frac{ne}{s} \text{exp}\left[-\log d + \log (\log d)^{\frac{1}{k-1}} - \log \frac{e}{(k!)^{1/(k-1)}} + o(1)\right] \\
&= (1+o(1)) \frac{(\log d)^{\frac{1}{k-1}} (k!)^{\frac{1}{k-1}} }{p^{\frac{1}{k-1}} s} = (1+o(1)),
\end{align*}
from which we establish that
\begin{equation}
\label{1_minus_p}
    (1-p)^{\frac{f(s)}{k!}} = (1+o(1)) \frac{s}{ne}.
\end{equation}
But at the same time, one can similarly derive that
\begin{equation}
    \label{1_minus_p_small_power}
    (1-p)^{O(s^{k-2})} = 1+o(1).
\end{equation}
Finally, two asymptotic relations follow, which we will use further on:
\begin{align}
     \left(1-(1-p)^{s \choose k-1}\right)^{n-s}  \stackrel{\eqref{1_minus_p}}{=} \left(1-(1+o(1))\left(\frac{s}{ne}\right)^{k}\right)^{n-s}&=e^{-(1+o(1))\frac{s^k}{e^kn^{k-1}}}, \label{pre_exp} \\
    \left[1-(1-p)^{s \choose k-1} - p{s \choose k-1}(1-p)^{{s \choose k-1}-1}\right]^{n-s} &= e^{-(1+o(1)) \frac{ps^{2k-1}}{(k-1)!e^k n^{k-1}}}.\label{pre_F}
\end{align}

\subsection{Defining and first estimating $s_z$ and $r_z$} To start, we define $E(n, k, s, r)$ as the expected value of the number of augmented independent sets of order $s$ with exactly $r$ edges in $H(n, k, p)$. There are ${n \choose s+r}$ ways to choose the location of our set, for each of which there are $\frac{(s+r)!}{((s-(k-1)r)! (k!)^r r!}$ ways to choose the edges of the $k$-matching inside. The probability of choosing internal edges accordingly is $p^r(1-p)^{{s+r \choose k} - r}$. Finally, the probability that each vertex outside our set has at least two edges drawn into this set is $F(n, k, s, r)^{n-s-r}$, where
 
\begin{equation}
    \label{F-formula}
    F(n, k, s, r) := 1 - (1-p)^{s+r \choose k-1} - {s+r \choose k-1}p(1-p)^{{s+r \choose k-1} - 1} .
\end{equation}
Thus, we have
\begin{equation}
    \label{E-formula}
    E(n, k, s, r) = {n \choose s+r} \frac{(s+r)!}{(s-(k-1)r)! (k!)^r r!} p^r(1-p)^{{s+r \choose k} - r} F(n, k, s, r)^{n-s-r}.
\end{equation}

\vspace{\baselineskip}
Now we define $s_z = s_z(n)$ as the largest $s \leq s_x$ for which there exists an $r$ such that $E(n, k, s, r) > n^{\varepsilon}$. The values $s_z$ and $s_x$ are close to each other, but not equal for small values of $p$, as the following lemma demonstrates:

\begin{lemma}
\label{lm:s_diff}
    If $p=\omega\left(\log n / n^{\frac{(k-1)^2}{k}}\right)$ then $s_z=s_x$. Otherwise, we have
    $$s_z \geq s_x - O\left(\frac{s_x^k}{n^{k-1}}\right) \geq s_x - o(s_x^{\frac{1}{k}}).$$
\end{lemma}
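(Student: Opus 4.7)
The plan is to analyze $E(n,k,s,r)$ via the factorization $E(n,k,s,r) = \mathbb{E}[X_t]\cdot F(n,k,s,r)^{n-t}\cdot N_r(t)$ where $t = s+r$ and $N_r(t) := \frac{t!}{(k!)^r r!(t-kr)!}(p/(1-p))^r$ is the number of $r$-matchings in a $t$-set times $(p/(1-p))^r$. Crucially, $\mathbb{E}[X_t]$ and $F(n,k,s,r)$ depend only on $t$, not on $r$ individually, so fixing the total size $t = s_x$ and varying $r$ shifts the ``order'' $s = s_x - r$ while paying only the combinatorial cost $N_r(s_x)$; this is the mechanism for trading against the maximality penalty $F(s_x)^{n-s_x}$.

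Set $\gamma := s_x^k/n^{k-1}$. Using~\eqref{pre_F} together with $ps_x^{k-1}\sim k!\log d$ (which follows from raising~\eqref{1_minus_p} to the $k$-th power), I would first establish the key estimate $F(n,k,s_x,0)^{n-s_x} = \exp\bigl(-(1+o(1))\, k\log d\cdot\gamma/e^k\bigr)$, identifying $\gamma$ as the relevant parameter. In the first case the hypothesis $p = \omega(\log n/n^{(k-1)^2/k})$ translates to $\gamma\log d = o(\log n)$, so $F^{n-s_x} = n^{-o(1)}$ and hence $E(n,k,s_x,0) = \mathbb{E}[X_{s_x}] F^{n-s_x} > n^{2\varepsilon - o(1)} > n^\varepsilon$; this forces $s_z \geq s_x$, which together with the trivial $s_z\leq s_x$ gives $s_z = s_x$.

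For the general case I would fix $t = s_x$ and choose $r$ of order $\gamma$. By the recursion for $N_r(s_x)/N_{r-1}(s_x)$, summation in log scale, and Stirling, one obtains $\log N_r(s_x) = r\log(es_x^kp/(k!r)) + O(\log r + r^2/s_x)$. Substituting $s_x^kp/k! \sim s_x\log d$ and the asymptotic expansion of $\log s_x$ obtained from~\eqref{main_fm}, a careful computation shows that for $r\sim \gamma$ the $\log\log d$ corrections cancel between $\log s_x$ and $\log\gamma$, leaving $\log(es_x^kp/(k!r)) = (k-1)\log d + O(1)$. Hence $\log N_r(s_x) = r(k-1)\log d + O(r)$, and taking $r := \lceil k\gamma/((k-1)e^k)\rceil + O(\gamma/\log d) + 1 = O(\gamma)$ makes
\[
\log\bigl(N_r(s_x)\cdot F(s_x)^{n-s_x}\bigr) \geq (k-1)\log d + O(1) > 0,
\]
so $E(n,k,s_x-r,r) > \mathbb{E}[X_{s_x}] > n^\varepsilon$, yielding $s_z \geq s_x - r = s_x - O(s_x^k/n^{k-1})$. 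Finally, the bound $O(s_x^k/n^{k-1}) = o(s_x^{1/k})$ is equivalent to $s_x = o(n^{k/(k+1)})$, which is immediate from~\eqref{main_fm} and the assumption $p > n^{-(k-1)k/(k+1)+\varepsilon}$.

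The main technical obstacle I anticipate is verifying the cancellation making $\log(es_x^kp/(k!r)) = (k-1)\log d + O(1)$ rather than $(k-1)\log d + O(\log\log n)$: the would-be $O(\log\log n)$ errors coming from $\log s_x$ and $\log\gamma$ must cancel exactly, since after multiplication by $r\sim \gamma$ (which can be polynomial in $n$) even a $\log\log n$ residual would swamp the $(k-1)\log d$ surplus arranged by our choice of $r$. Once this cancellation is nailed down the rest is bookkeeping.
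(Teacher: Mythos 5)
Your proof is correct and takes a genuinely different decomposition from the paper's. You fix the total size $t = s+r = s_x$ and increase $r$, exploiting the factorization $E(n,k,s,r) = \mathbb{E}[X_t]\,F(n,k,s,r)^{n-t}\,N_r(t)$ in which the first two factors depend only on $t$, so all of the gain comes from the matching count $N_r(t)$ --- which is conceptually exactly the point of the augmentation. The paper instead keeps $r=0$ and moves $s$ down to $s_x - C\gamma$, computing $E(n,k,s,0) = {n\choose s}(1-p)^{s\choose k}F(n,k,s,0)^{n-s}$ via the expectation ratio~\eqref{pre_ratio}; this reuses a formula already at hand and avoids the $N_r$ bookkeeping, but your version makes the role of the augmentation more transparent. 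The two are related by a change of variables, land on the same threshold $\approx k\gamma/((k-1)e^k)$, and give the same conclusion $s_z \geq s_x - O(\gamma)$. One thing to tidy: your choice $r = \lceil k\gamma/((k-1)e^k)\rceil + O(\gamma/\log d) + 1$ is not a well-defined integer, and, more to the point, the $(1+o(1))$ in~\eqref{pre_F} injects an $o(\gamma\log d)$ error into $\log F^{\,n-s_x}$ that an additive $O(\gamma/\log d)$ correction to $r$ cannot absorb when $\gamma$ is polynomially large (such a correction only buys $O(\gamma)$). The clean fix is $r = \lceil C'\gamma\rceil$ for any constant $C' > k/((k-1)e^k)$, which yields a surplus of order $\gamma\log d$ dominating every error term --- including the $O(\gamma\log\log d)$ one would get even without the $\log\log d$-cancellation you flag as the main obstacle; that cancellation is real (it follows from comparing~\eqref{main_fm} with the explicit form of $\gamma = s_x^k/n^{k-1}$), but it is not actually load-bearing.
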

\begin{proof}
    Let us consider $r=0$ in~\eqref{E-formula}:
    $$E(n, k, s, 0) = {n \choose s} (1-p)^{{s \choose k}} F(n, k, s, 0)^{n-s}.$$
    In the case $p=\omega\left(n^{\frac{-(k-1)^2}{k}} \log n\right)$, the expression on the right-hand side of~\eqref{pre_F} is subpolynomial. Therefore, $F(n, k, s, 0)$ is subpolynomial and $E(n, k, s_x, 0) > n^{\varepsilon}$, i.e., $s_z = s_x$.

    Next, we consider the case $p=O \left(n^{\frac{-(k-1)^2}{k}} \log n\right)$. Note that for $s=s_x - C\frac{s_x^{k}}{n^{k-1}}$, where $C$ is a sufficiently large constant, we have

    \begin{equation}
        \label{pre_ratio}
        \frac{{n \choose s}(1-p)^{s \choose k}}{{n \choose s+1}(1-p)^{s+1 \choose k}} \stackrel{\eqref{binom_diff}}{=} \frac{s+1}{n-s}(1-p)^{-\frac{f(s)}{(k-1)!}} \stackrel{\eqref{1_minus_p}}{=} (1+o(1)) \frac{n^{k-1}e^k}{s^{k-1}},
    \end{equation}
    and therefore
    \begin{align*}
        E(n, k, s, 0) &\stackrel{\eqref{pre_F}}{=} {n \choose s} (1-p)^{{s \choose k}} \text{exp}\Big\{({-(1+o(1)) \frac{ps^{2k-1}}{(k-1)!e^k n^{k-1}}}\Big\} \\
            &\text{\hspace{3pt}}\geq \left(\frac{n^{k-1}e^k}{2s^{k-1}}\right)^{\frac{Cs^k}{n^{k-1}}} \text{exp}\Big\{({-(1+o(1)) \frac{ps^{2k-1}}{(k-1)!e^k n^{k-1}}}\Big\}.
    \end{align*}
    Thus $E(n, k, s, 0) > n^{\varepsilon}$, if $C$ is a sufficiently large constant. Lemma~\ref{lm:s_diff} is proved.
\end{proof}
From this lemma, it particularly follows that estimates (\ref{1_minus_p})--(\ref{pre_F}) hold true on the entire interval $s_z \leq s \leq s_x$.
Now, for any $s$ in this interval, we define
$$r_M(s) = r_M(n, k, s) := \text{arg } \max_{r} \{E(n, k, s, r)\}.$$
Using this notation, we finally introduce formally the random variable $Z$. Let $r_z = r_M(s_z)$, then $Z$ is the number of augmented independent sets of order $s_z$ with exactly $r_z$ edges inside. Note that such sets have $s_z+r_z$ vertices and $\mathbb{E} [Z] = E(n, k, s_z, r_z) > n^{\varepsilon}.$

\subsection{Estimating of $r_z$}
We will estimate the ratio of successive terms of (\ref{E-formula}) when $r$ changes by $1$. Note the obvious observation concerning augmented sets: $r \leq s/(k-1)$; this fact will be used a couple of times below.

First, we show that $F(n, k, s, r)$ will have an insignificant effect on the ratio. From (\ref{1_minus_p}), one can easily verify that $F(n, k, s, r) = 1 - o(1)$. Now, let us estimate the difference between two consecutive values of $F$ in~\eqref{F-formula}:
\begin{align*}
\nonumber
        F(n, k, s, r+1) - F(n, k, s, r) &\leq (1-p)^{{s+r \choose k-1} - 1} \left(1+{s+r \choose k-1}p\right) \left(1 - (1-p)^{{s+(r+1) \choose k-1} - {s+r \choose k-1}}\right) \\
        &= (1+o(1)) (1-p)^{{s+r \choose k-1}} {s+r \choose k-1} \frac{(s+r)^{k-2}}{(k-2)!} p^2 \\
        &= (1+o(1)) \frac{(s+r)^{2k-3}}{(k-1)!(k-2)!} p^2 (1-p)^{{s+r \choose k-1}}.
\end{align*}
Therefore, we have
\begin{align*}
    \nonumber
         \frac{F(n, k, s, r+1)^{n-s-(r+1)}}{F(n, k, s, r)^{n-s-r}} &= \left(\frac{F(n, k, s, r+1)}{F(n, k, s, r)}\right)^{n-s-r} F(n, k, s, r+1)^{-1}  \\
         &\leq (1+o(1))\left(1 + \frac{ (1+o(1)) (s+r)^{2k-3}p^2 (1-p)^{{s+r \choose k-1}})}{(k-1)!(k-2)!F(n, k, s, r)}\right)^{n-s-r} \\
         &= (1+o(1)) \left(1+(1+o(1)) \frac{(s+r)^{2k-3}}{(k-1)!(k-2)!} p^2 (1-p)^{s+r \choose k-1}\right)^{n-s-r}.
\end{align*}
Since
\begin{align*}
    \frac{(s+r)^{2k-3}}{(k-1)!(k-2)!} p^2 (1-p)^{s+r \choose k-1} (n-s-r) &\text{\hspace{4pt}}\leq \frac{(ks)^{2k-3}}{(k-1)^{2k-3}(k-1)!(k-2)!} p^2n (1-p)^{{s \choose k-1}} \\
    &\stackrel{\eqref{1_minus_p}}{=} \frac{(1+o(1))k^{2k-3}}{(k-1)^{2k-3}(k-1)!(k-2)!e^k} \frac{s^{3k-3}p^2}{n^{k-1}} \\
    &\text{\hspace{4pt}}= o(1),
\end{align*}
then we get, as desired, that
\begin{equation}
    \label{F_ratio}
    \frac{F(n, k, s,r+1)^{n-s-r-1}}{F(n, k, s, r)^{n-s-r}} = (1+o(1)).
\end{equation}

With this bound on the ratio of two probabilities in hand, and applying the same machinery as in~\eqref{pre_ratio}, we get that under the condition $r = o\left(p^{-\frac{1}{k-1}}\right)$ (to use asymptotics above):
\begin{align}
    \frac{E(n, k, s, r+1)}{E(n, k, s, r)} &= (1+o(1)) \frac{{n \choose s+r+1} (1-p)^{s+r+1 \choose k}}{{n \choose s+r} (1-p)^{s+r \choose k}} \frac{(s+r+1)f(s-(k-1)r)p}{k!(r+1)} \nonumber \\
    & \xlongequal{\text{\eqref{binom_diff}, \eqref{1_minus_p}}} (1+o(1)) \left(\frac{s^{k-1}}{n^{k-1}e^{k}}\right) \frac{(s+r+1)f(s-(k-1)r)p}{k!(r+1)} \label{E_ratio1} \\
    &= (1+ o(1)) \frac{s^{2k-1}p}{k!e^{k}(r+1)n^{k-1}}\left(1 - (k^2-2k)\frac{r}{s}\right). \label{E_ratio2}
\end{align}
This calculation provides the desired estimate for $r_z$:

\begin{lemma}
\label{lm:r_z}
    $$r_z = \left\lceil (1+o(1)) \frac{(k!)^{\frac{k}{k-1}}\log (d)^{\frac{2k-1}{k-1}}}{e^{k}n^{k-1}p^{\frac{k}{k-1}}} - 1 \right\rceil.$$
\end{lemma}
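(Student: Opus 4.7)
План доказательства. По определению $r_z = r_M(s_z)$ является аргмаксимумом функции $r \mapsto E(n, k, s_z, r)$, поэтому $r_z$ есть наименьшее $r$, при котором отношение $E(n, k, s_z, r+1)/E(n, k, s_z, r)$ становится меньше единицы. Выведенная в~\eqref{E_ratio2} формула даёт явный вид этого отношения, причём его правая часть монотонно убывает по $r$, что гарантирует единственность точки перехода.

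Первый шаг --- проверка допустимости применяемых асимптотик: нужно убедиться, что в окрестности искомого $r$ выполнены оба условия $r = o(p^{-1/(k-1)})$ (для законности оценок~\eqref{E_ratio1} и~\eqref{E_ratio2}) и $r/s_z = o(1)$ (чтобы множитель $1 - (k^2-2k)r/s_z$ в~\eqref{E_ratio2} был равен $1+o(1)$). После оценки порядка $r_z \asymp s_z^{2k-1} p / n^{k-1}$ и асимптотики $s_z \sim (k!\log d/p)^{1/(k-1)}$ (следующей из Леммы~\ref{lm:s_diff}, формулы~\eqref{main_fm} и соотношения $f^{-1}(y) \sim y^{1/(k-1)}$) оба условия прямо вытекают из ограничения $p \geq n^{-(k-1)k/(k+1)+\varepsilon}$.

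Основное вычисление таково: приравнивая отношение из~\eqref{E_ratio2} к единице с учётом $r/s_z = o(1)$, получаю
$$r_z + 1 = (1+o(1))\frac{s_z^{2k-1} p}{k! e^k n^{k-1}}.$$
Подставляя асимптотику $s_z^{2k-1} \sim (k!\log d/p)^{(2k-1)/(k-1)}$ и используя тождества $(2k-1)/(k-1) - 1 = k/(k-1)$, $1 - (2k-1)/(k-1) = -k/(k-1)$, преобразую правую часть к заявленному виду. Так как $r_z$ целочислен, остаётся взять соответствующее округление.

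Главная техническая деликатность --- аккуратный контроль за множителем $1+o(1)$ в асимптотике $s_z$, поскольку он возводится в степень $2k-1$. Спасает то, что по Лемме~\ref{lm:s_diff} $s_z = s_x(1+o(1))$ во всём рассматриваемом диапазоне $p$ (в силу $s_x = o(n)$), а значит $s_z^{2k-1} \sim s_x^{2k-1}$, после чего можно применять асимптотику~\eqref{main_fm} для $s_x$.
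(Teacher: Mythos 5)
Your proof follows essentially the same route as the paper: identify $r_z$ as the (unique, by monotonicity) threshold where the ratio $E(n,k,s_z,r+1)/E(n,k,s_z,r)$ from~\eqref{E_ratio2} crosses one, verify the asymptotic applicability conditions near that $r$, equate the ratio to one to get $r_z+1 = (1+o(1))\frac{s_z^{2k-1}p}{k!e^kn^{k-1}}$, and then substitute $s_z\sim s_x\sim (k!\log d/p)^{1/(k-1)}$ via Lemma~\ref{lm:s_diff}; the exponent arithmetic you describe matches the paper exactly, as does the observation that the $(1+o(1))$ factor survives raising to the fixed power $2k-1$.

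One gap worth closing: you only verify the conditions $r=o(p^{-1/(k-1)})$ and $r/s_z=o(1)$ \emph{in a neighborhood of the candidate} $r$, and your monotonicity argument applies only to the right-hand side of~\eqref{E_ratio2}, i.e.\ only where that asymptotic equality is valid. But $r_z=r_M(s_z)$ is defined as an argmax over the full range $0\leq r\leq s_z/(k-1)$, so you also have to rule out that $E(n,k,s_z,\cdot)$ rises again for larger $r$. The paper dispatches this with one remark: for $r$ outside the range where~\eqref{E_ratio2} holds as an equality, the same derivation (replacing~\eqref{1_minus_p} by the inequality~\eqref{1_minus_p_ineq}) still gives the expression in~\eqref{E_ratio2} as an \emph{upper bound} on the ratio, which is therefore below one for all $r\geq\frac{s_z^{2k-1}p}{k!e^kn^{k-1}}$. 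Adding that sentence makes your argument complete.
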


\begin{proof}
    First note that the form of~\eqref{E_ratio2} suggests $r_M(s)$ should be roughly $\frac{s^{2k-1}p}{n^{k-1}}$ as this is the regime in which the ratio is roughly one. Since $\frac{s^{2k-1}p}{n^{k-1}} = o\left(p^{\frac{1}{k-1}}\right)$, the estimate given by~\eqref{E_ratio2} is also true in the neighborhood of $r=\frac{s^{2k-1}p}{n^{k-1}}$. Moreover, for $r = \Omega\left(p^\frac{1}{k-1}\right)$ the arguments above can be easily adapted (using inequality~\eqref{1_minus_p_ineq}) to show that $\frac{E(n, k, s, r+1)}{E(n, k, s, r)}$ is bounded above by the expression in~\eqref{E_ratio2}. It follows that we have
    $$r \geq \frac{s^{2k-1}p}{k!e^{k}n^{k-1}} \hspace{11pt} \Rightarrow \hspace{10pt}  \frac{E(n, k, s, r +1)}{E(n, k, s, r)} < 1.$$
    Therefore, we can conclude that
    \begin{equation}
    \label{r_M_estimate}
        r_M(s) \leq \frac{s^{2k-1}p}{k!e^{k}n^{k-1}} = O\left(\frac{\log (d) s^{k}}{n^{k-1}} \right).
    \end{equation}
    Note that if
    $r = O\left(\frac{s^{2k-1}p}{n^{k-1}}\right)$ then the term $\left(1-(k^2-2k)\frac{r}{s}\right)$ in~\eqref{E_ratio2} is equal to $1+o(1)$. Thus,
    \begin{equation}
    \label{r_M}
        r_M(s) = \left\lceil \frac{s^{2k-1}p}{k!e^{k}n^{k-1}}(1+o(1)) - 1\right\rceil.
    \end{equation}
   In light of Lemma~\ref{lm:s_diff}, the proof is complete.
\end{proof}

We note in passing that Lemma~\ref{lm:r_z} implies that the described augmentation and $r_z$ become relevant at $p=\Theta\left(\log (n)^{\frac{2k-1}{k}}n^{-\frac{(k-1)^2}{k}}\right)$, as for $p$ larger than this regime $r_z = 0$. Furthermore, for all $p$ that we consider in this paper, i.e. for $p > n^{-\frac{k(k-1)}{k+1} + \varepsilon}$, we have
\begin{equation}
    \label{r_order}
    r_z = o(s_x^{\frac{1}{k}}).
\end{equation}

\subsection{Estimating of $s_z$} We may assume $p= O\left(\log (n) n^{-\frac{(k-1)^2}{k}}\right)$ as we have $s_z = s_x$ for larger $p$ by Lemma~\ref{lm:s_diff}. Note further that~\eqref{r_M} implies that $r_M(s) = \Omega \left( \log n\right)$ for $s_z \leq s \leq s_x$ and $p$ in this regime. Recall that, appealing to~\eqref{pre_F}, we have
$$E(n, k, s_x, 0) = \mathbb{E} [X_{s_x}] e^{-(1+o(1))\frac{ps_x^{2k-1}}{(k-1)!e^kn^{k-1}}}.$$
Next observe that, since $F(n, k, s+1, r) = F(n, k, s, r+1)$ by~\eqref{F-formula}, we can easily adapt the argument that proves~\eqref{F_ratio} to establish
$$\frac{F(n, k, s+1, r)^{n-s-r-1}}{F(n, k, s, r)^{n-s-r}} = 1+o(1)$$
for $s_z \leq s \leq s_x$. So we can calculate the ratio of expectations over a change in $s$ in the same way that we established~\eqref{E_ratio2}. Namely, for $s_z \leq s \leq s_x$ and $r = o\left(p^{\frac{1}{k-1}}\right)$, it is not difficult to verify that
\begin{equation}
    \label{E_ratio_s}
    \frac{E(n, k, s+1, r)}{E(n, k, s, r)} = (1+o(1)) \frac{s^{k-1}}{n^{k-1}e^k}.
\end{equation}
Using~\eqref{pre_F}, \eqref{E_ratio_s}, \eqref{E_ratio2} and \eqref{r_M}, we write:
\begin{align*}
    &E(n, k, s, r_M(s)) = E(n, k, s_x, 0) \prod\limits_{\ell=s}^{s_x-1} \frac{E(n, k, \ell, 0)}{E(n, k, \ell+1, 0)} \prod\limits_{r=0}^{r_M(k)-1} \frac{E(n, k, s, r+1)}{E(n, k, s, r)} \\
    &= \mathbb{E} [X_{s_x}] e^{-(1+o(1))\frac{s_x^{2k-1}p}{(k-1)!e^kn^{k-1}}} \left( \frac{e^k n^{k-1}}{s^{k-1}}(1+o(1))\right)^{s_x-s} \left(\frac{s^{2k-1}p}{k!e^{k}n^{k-1}}(1+o(1))\right)^{r_M(s)} \frac{1}{r_M(s)!} \\
    &=  \mathbb{E} [X_{s_x}] e^{-(1+o(1))\frac{s_x^{2k-1}p}{(k-1)!e^kn^{k-1}} +(1+o(1))\frac{s_x^{2k-1}p}{k!e^{k}n^{k-1}}} \left( \frac{e^k n^{k-1}}{s^{k-1}}(1+o(1))\right)^{s_x-s} \\
    &= \mathbb{E} [X_{s_x}] e^{-(1+o(1))\frac{(k-1)s_x^{2k-1}p}{k!e^kn^{k-1}}} \left( \frac{e^k n^{k-1}}{s^{k-1}}(1+o(1))\right)^{s_x-s}.
\end{align*}
This estimate is sufficient to prove the second part of the Theorem~\ref{th:main_tech}:
\begin{lemma}
\begin{align*}
n^{-\frac{k(k-1)}{k+1} + \varepsilon} <p \ll \log (n) n^{-\frac{(k-1)^2}{k}}   \Rightarrow  s_x - s_z = (1+o(1)) \frac{s^k}{e^kn^{k-1}} = (1+o(1)) \frac{(k!\log d)^{\frac{k}{k-1}}}{e^kn^{k-1} p^{\frac{k}{k-1}}}; \\
p = C \log (n) n^{-\frac{(k-1)^2}{k}}   \Rightarrow   s_x - s_z = \left\lceil \frac{((k-1)!)^{\frac{k}{k-1}}}{C^{\frac{k}{k-1}}e^k} - \frac{k \log \mathbb{E} [X_{s_x}]}{(k-1)\log n} + \frac{k}{k-1}\varepsilon \pm o(1)\right\rceil.
\end{align*}
\end{lemma}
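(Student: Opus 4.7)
The plan is to extract $s_x - s_z$ by equating the logarithm of the master formula for $E(n, k, s, r_M(s))$ derived just above the lemma with the defining threshold $\log(n^\varepsilon) = \varepsilon \log n$ of $s_z$. Taking logs of that formula gives
\begin{align*}
\log E(n, k, s, r_M(s)) = \log\mathbb{E}[X_{s_x}] &- (1+o(1))\tfrac{(k-1)s_x^{2k-1}p}{k!\,e^k n^{k-1}} \\
&+ (s_x - s)\bigl(\log\tfrac{e^k n^{k-1}}{s^{k-1}} + o(1)\bigr);
\end{align*}
since $s_z$ is integer and maximal, the equation $\log E(n, k, s_z, r_M(s_z)) = \varepsilon \log n$ holds modulo a discretization gap of at most one step $\log(e^k n^{k-1}/s_z^{k-1})$.

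The next step is to uncover the common factor $(k-1)\log d$ hidden in both awkward expressions. From~\eqref{1_minus_p} one gets $p s_x^{k-1} \sim k! \log(en/s_x) \sim k!\log d$, whence $\tfrac{s_x^{2k-1}p}{n^{k-1}} \sim \tfrac{k!\,s_x^k \log d}{n^{k-1}}$. A direct calculation from~\eqref{main_fm} also gives $\log(e^k n^{k-1}/s^{k-1}) = (k-1)\log d\,(1+o(1))$, since $\log(k!\log d) = O(\log\log n) = o(\log d)$. Substituting and dividing by $(k-1)\log d$ recasts the threshold equation as
$$s_x - s_z = \frac{\log\mathbb{E}[X_{s_x}] - \varepsilon\log n}{(k-1)\log d}(1+o(1)) + \frac{s_x^k}{e^k n^{k-1}}(1+o(1)) + O(1),$$
where the $O(1)$ absorbs the one-step discretization offset.

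In the first regime $p \ll \log(n) n^{-(k-1)^2/k}$, I would check that $s_x^k/n^{k-1} \to \infty$, so the second summand dominates and absorbs both the first summand (which is $O(1)$, since numerator and denominator are each $\Theta(\log n)$) and the $O(1)$ discretization into its own $(1+o(1))$ factor; rewriting $s_x^{k-1}\sim k!\log d/p$ then produces the equivalent form $\tfrac{(k!\log d)^{k/(k-1)}}{e^k n^{k-1} p^{k/(k-1)}}$. In the second regime $p = C \log(n) n^{-(k-1)^2/k}$ all ingredients are bounded: $\log d \sim \log n / k$, $s_x^{k-1}\sim ((k-1)!/C)n^{(k-1)^2/k}$, so $\tfrac{s_x^k}{e^k n^{k-1}} \to \tfrac{((k-1)!)^{k/(k-1)}}{C^{k/(k-1)} e^k}$ and $\tfrac{\log\mathbb{E}[X_{s_x}] - \varepsilon \log n}{(k-1)\log d} \to \tfrac{k\log\mathbb{E}[X_{s_x}]}{(k-1)\log n} - \tfrac{k\varepsilon}{k-1}$, recovering exactly the argument of the ceiling in the statement.

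The main obstacle is the bookkeeping in Case 2, where $s_x - s_z$ is a bounded integer: all the $o(1)$ slack from the asymptotic substitutions together with the $O(1)$ discretization (itself at most $1$ after division by $(k-1)\log d$) must be shown to collapse into the single $\pm o(1)$ inside the ceiling. A conceptual subtlety worth highlighting is that $\log\mathbb{E}[X_{s_x}]$ is not a convergent sequence but can take any value in an interval of width $\sim (k-1)\log d$ as $n$ varies, because $s_x$ is defined by a strict inequality; this is precisely the mechanism responsible for the oscillation of $\xi_n$ within a fixed interval in Theorem~\ref{th:main_tech}.
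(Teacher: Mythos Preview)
Your approach is essentially identical to the paper's: take logs of the master formula for $E(n,k,s,r_M(s))$, use $ps_x^{k-1}\sim k!\log d$ and $\log(e^kn^{k-1}/s^{k-1})\sim(k-1)\log d$, and read off the smallest integer $\mathcal S=s_x-s_z$ satisfying the resulting linear inequality. The paper does exactly this, writing the condition directly as ``$s_x-s_z$ is the smallest integer $\mathcal S$ such that $\log\mathbb E[X_{s_x}]-(1+o(1))\frac{s_x^{2k-1}p}{k(k-2)!e^kn^{k-1}}+\mathcal S(1+o(1))\frac{s^{k-1}p}{k(k-2)!}>\varepsilon\log n$'' and then specializing to the two regimes.

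There is, however, a sign slip in your recast equation. Solving the threshold identity $\varepsilon\log n=\log\mathbb E[X_{s_x}]-\text{(big term)}+(s_x-s_z)(k-1)\log d(1+o(1))$ for $s_x-s_z$ gives
\[
s_x-s_z=\frac{\varepsilon\log n-\log\mathbb E[X_{s_x}]}{(k-1)\log d}(1+o(1))+\frac{s_x^k}{e^kn^{k-1}}(1+o(1)),
\]
i.e.\ the numerator of the first fraction is $\varepsilon\log n-\log\mathbb E[X_{s_x}]$, not $\log\mathbb E[X_{s_x}]-\varepsilon\log n$ as you wrote. In Case~1 this is harmless (the term is $O(1)$ and absorbed). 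In Case~2 your sign produces $\frac{((k-1)!)^{k/(k-1)}}{C^{k/(k-1)}e^k}+\frac{k\log\mathbb E[X_{s_x}]}{(k-1)\log n}-\frac{k\varepsilon}{k-1}$, which is \emph{not} the ceiling argument in the statement; the signs on the last two terms are reversed. A second, smaller point: your ``$O(1)$ discretization'' that must ``collapse into the $\pm o(1)$'' is slightly off --- since $s_x-s_z$ is by definition the least integer satisfying a strict inequality, the discretization \emph{is} the ceiling, not an additional additive error on top of it, so there is nothing extra to absorb.
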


\begin{proof} 
We note that for $p$ in these ranges we have
    $$\log \frac{n}{s} = (1+o(1)) \log d = (1+o(1)) \frac{s^{k-1}p}{k!}.$$
It follows that $s_x - s_z$ --- is the smallest integer $\mathcal{S}$ such that
    $$\log \mathbb{E} [X_{s_x}] - (1+o(1)) \frac{s_x^{2k-1}p}{k(k-2)!e^kn^{k-1}} + \mathcal{S}(1+o(1)) \frac{s^{k-1}p}{k(k-2)!} > \varepsilon \log n.$$
If $p \ll \log (n) n^{-\frac{(k-1)^2}{k}}$ then $\log \mathbb{E} [X_{s_x}] = O(\log n) = o(\frac{s_x^{2k-1}p}{n^{k-1}})$ and the first part of the Lemma follows. On the other hand, if $p = C \log (n) n^{-\frac{(k-1)^2}{k}}$ then $\frac{s_x^{2k-1}p}{n^{k-1}} = (1+o(1)) \frac{(k-1!)^{\frac{2k-1}{k-1}} \log n}{C^{\frac{k}{k-1}}}$ and $s^{k-1}p = (1+o(1)) (k-1)! \log n$ and we recover the second part of the Lemma.

\end{proof}

\section{Upper bound}
\label{sc:first_moment}
In this section we show that, with high probability, no augmented independent set of order $s$ appears for any $s_z + 2 \leq s \leq s_x+1$. This is sufficient because we can simply consider the expected number of independent sets of size $s_x + 2$ to rule out the appearance of any larger independent set. We emphasize again that we only consider values of $s$ satisfying~\eqref{main_fm}. We begin with a simple observation:
\begin{lemma}
\label{lm:E_sum}
    For all $s$ that satisfy~\eqref{main_fm} we have
    $$\sum\limits_{r=0}^{s/(k-1)} E(n, k, s, r) \leq 4 \sum\limits_{r=0}^{2r_M(s)} E(n, k, s, r).$$
\end{lemma}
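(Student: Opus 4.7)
План состоит в том, чтобы показать геометрическое убывание членов $E(n,k,s,r)$ за пределами порогового значения $r_M(s)$, благодаря чему ``хвост'' суммы поглощается суммой первых $2r_M(s)+1$ членов. Основой служат уже выведенные оценки~\eqref{E_ratio1}--\eqref{E_ratio2} на отношение последовательных членов и оценка~\eqref{r_M} на положение максимума.

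Сначала я бы вывел равномерную верхнюю оценку
\[
\frac{E(n,k,s,r+1)}{E(n,k,s,r)} \leq (1+o(1)) \frac{s^{2k-1}p}{k!e^k(r+1)n^{k-1}}
\]
для всех $0 \leq r \leq s/(k-1)-1$. Она получается подстановкой простых верхних оценок $n-s-r \leq n$, $f(s-(k-1)r) \leq f(s) \sim s^{k-1}$ и $(1-p)^{f(s+r)/(k-1)!} \leq (1-p)^{f(s)/(k-1)!} = (1+o(1))(s/(ne))^k$ (последнее --- возведение~\eqref{1_minus_p} в $k$-ю степень) в явное выражение для отношения (из которого затем и выводится~\eqref{E_ratio1}); при этом $F$-множитель оценивается через $1+o(1)$ по~\eqref{F_ratio}. В отличие от~\eqref{E_ratio2}, эта оценка полностью обходит множитель $(1-(k^2-2k)r/s)$, который может становиться отрицательным при $r$, близких к $s/(k-1)$.

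Вторым шагом я воспользуюсь тем, что из~\eqref{r_M} правая часть полученной оценки есть $(1+o(1))\,R_0/(r+1)$, где $R_0 = s^{2k-1}p/(k!e^k n^{k-1}) \approx r_M(s)+1$. Отсюда при $r \geq 2r_M(s)+1$ отношение не превосходит $1/2+o(1) \leq 2/3$ для достаточно больших $n$. Применяя дополнительно $E(n,k,s,2r_M(s)+1) \leq E(n,k,s,2r_M(s))$ (из определения $r_M(s)$ как положения максимума), суммированием геометрической прогрессии получаем
\[
\sum_{r=2r_M(s)+1}^{s/(k-1)} E(n,k,s,r) \leq E(n,k,s,2r_M(s)+1)\sum_{j=0}^{\infty}(2/3)^j = 3\,E(n,k,s,2r_M(s)+1) \leq 3\sum_{r=0}^{2r_M(s)} E(n,k,s,r),
\]
откуда сразу следует искомое неравенство с константой $4$.

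Основная тонкость подхода --- обоснование равномерной верхней оценки на отношение во всём диапазоне $r \in [0, s/(k-1)-1]$. Но поскольку её вывод опирается лишь на монотонность $(1-p)^x$ по $x$ и тривиальное неравенство $f(s-(k-1)r) \leq f(s)$, никаких дополнительных ограничений на $r$ не возникает; в частности, избегается проблема с потенциально отрицательным множителем из~\eqref{E_ratio2}.
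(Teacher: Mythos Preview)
Your argument is correct and essentially coincides with the paper's: both derive the uniform upper bound $E(n,k,s,r+1)/E(n,k,s,r)\le(1+o(1))\,s^{2k-1}p/(k!e^k n^{k-1}(r+1))$ and use it to obtain geometric decay with ratio at most $2/3$ beyond $2r_M(s)$ (the paper additionally splits off the case $r_M=0$, which your formulation absorbs). One small imprecision: the argmax property gives $E(n,k,s,2r_M+1)\le E(n,k,s,r_M)$, not $E(n,k,s,2r_M+1)\le E(n,k,s,2r_M)$ as you state --- but since $E(n,k,s,r_M)\le\sum_{r=0}^{2r_M}E(n,k,s,r)$, your final inequality stands unchanged.
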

\begin{proof}
    We start by noting that, similarly to \eqref{E_ratio2}, one can show that
    $\frac{E(n, k, s, r+1)}{E(n, k, s, r)} \leq (1+o(1)) \frac{s^{2k-1}p}{k!e^{k}n^{k-1}(r+1)}.$
Now we consider cases depending on the value of $r_M = r_M(s)$. First, note that if $r_M=0$ then by ~\eqref{r_M} we get $\frac{s^{2k-1}p}{k!e^{k}n^{k-1}} < 1 + o(1)$ and
$$\sum\limits_{r=0}^{s/(k-1)} E(n, k, s, r) \leq (1+o(1))E(n, k, s, 0) \sum\limits_{r=0}^{\infty} \frac{1}{r!} \leq 4E(n, k, s, 0).$$
Now suppose $r_M \geq 1$. Again recalling~\eqref{r_M}, note that if $r > 2r_M$ then we have
$$E(n, k, s, r+1)/E(n, k, s, r) \leq \frac{2}{3}.$$
So we get
\begin{align*}
    \sum\limits_{r=0}^{s/(k-1)} E(n, k, s, r) &= \sum\limits_{r=0}^{2r_M}E(n, k, s, r) + \sum\limits_{r=2r_M+1}^{s/(k-1)} E(n, k, s, r) \\
    &\leq\left(\sum\limits_{r=0}^{2r_M}E(n, k, s, r)\right) + 3E(n, k, s, 2r_M+1)  \\
    &\leq \left(\sum\limits_{r=0}^{2r_M}E(n, k, s, r)\right) + 3E(n, k, s, 2r_M).
\end{align*}
\end{proof}
Note that it follows from the proof of this lemma that
$$E(n, k, s+1, [(s+1)/(k-1)]) \leq \frac{2}{3} E(n, k, s+1, [(s+1)/(k-1)]-1).$$
Applying this observation, we have
\begin{align*}
    \sum\limits_{r=0}^{(s+1)/(k-1)} E(n, k, s+1, r) &\text{\hspace{4pt}}<\text{\hspace{3pt}} \frac{5}{3}\sum\limits_{r=0}^{s/(k-1)} E(n, k, s+1, r) \stackrel{\text{L.}\ref{lm:E_sum}}{<}\frac{20}{3}\sum\limits_{r=0}^{2r_M(s+1)} E(n, k, s+1, r) \\ &\stackrel{\eqref{E_ratio_s}}{<} \frac{s^{k-1}}{n^{k-1}} \sum\limits_{r=0}^{2r_M(s+1)} E(n, k, s, r)  < \frac{s^{k-1}}{n^{k-1}}\sum\limits_{r=0}^{s / (k-1)} E(n, k, s, r).
\end{align*}

Therefore, we get
\begin{align*}
    \sum\limits_{s=s_z+2}^{s_x+1} \left(\sum\limits_{r=0}^{s/(k-1)} E(n, k, s, r)\right) &\text{\hspace{4pt}}<\text{\hspace{3pt}} \frac{2s_z^{k-1}}{n^{k-1}}\sum\limits_{r=0}^{(s_z+1)/(k-1)} E(n, k, s_z+1, r) \\
    &\stackrel{\text{L.} \ref{lm:E_sum}}{\leq} \frac{8s_z^{k-1}}{n^{k-1}}\sum\limits_{r=0}^{2r_M(s_z+1)} E(n, k, s_z+1, r) \\
    &\stackrel{\eqref{r_M_estimate}}{\leq} \frac{8s_z^{k-1}}{n^{k-1}} \frac{3s_z^{2k-1}p}{e^{k}n^{k-1}} E(n, k, s_z+1, r_M(s_z+1)) \\
    &\stackrel{\hspace{20pt}}{\leq} \frac{8s_z^{k-1}}{n^{k-1}} \frac{3s_z^{2k-1}p}{e^{k}n^{k-1}} n^{\varepsilon} \\ &\stackrel{\hspace{20pt}}{=} O\left( \frac{s_z^{3k-2}pn^{\varepsilon}}{n^{2k-2}}\right) = o(1).
\end{align*}
Hence, whp no augmented independent set of order $s$ for any $s \geq s_z+2$ appears, and by Lemma~\ref{lm:idea} we have $\alpha(H) \leq k_z+1$ whp.
\section{Lower bound: second moment of $Z$}
\label{sc:second_moment}

In this section we apply the second moment method to show that an augmented independent set of order $s_z$ appears with high probability; more specifically, we prove that such a set with $s_z+r_z$ vertices and $r_z=r_M(s_z)$ internal edges appears whp. For convenience throughout the rest of this section we let
$$s=s_z, \quad \quad r=r_z=r_M(s_z) \quad \text{and}\quad \tilde{s} = s+r.$$

Recall that the random variable $Z$ counts the number of augmented independent sets of order $s$ with $r$ internal edges, and by definition of $r_M$ and $Z$ we have
$$\mathbb{E} [Z] > n^{\varepsilon},$$
and our goal is to show that $Z > 0$ whp.

Break up $Z$ into a sum of indicator random variables. Let $\mathbb{T}$ be the collection of all pairs $(T, m_T)$ where $T$ is a set of vertices of size $\tilde{s}$ and $m_T$ is $r$ pairwise disjoint internal edges of $T$. Note that $$|\mathbb{T}| = {n \choose \tilde{s}} \frac{\tilde{s}!}{(\tilde{s}-kr)!(k!)^r r!}.$$
For each pair $(T, m_T) \in \mathbb{T}$, we denote by $Z_{T, m_T}$ the indicator random variable for the event that $T$ is an augmented independent set with $k$-matching $m_T$ inside. We have
\begin{align*}
    {\sf Var(Z)} &\leq \sum\limits_{(T, m_T) \in \mathbb{T}} {\sf Var}(Z_{T, m_T}) + \sum\limits_{T \neq U} {\sf Cov}(Z_{T, m_T}, Z_{U, m_U})  \\
    &\leq \mathbb{E}(Z) + \sum\limits_{T \neq U} [\mathbb{E} (Z_{T, m_T}Z_{U, m_U}) - \mathbb{E} (Z_{T, m_T}) \mathbb{E} (Z_{U, m_U}) ],
\end{align*}
where here and throughout this section such summations are over all $(T, m_T), (U, m_U) \in \mathbb{T}$ that have the specified $T$ and $U$. Note that we are making use of the full covariance we will see below that it is necessary to handle sets $T, U$ such that $|T \cap U|$ has order $\frac{s^2}{n}$. Next, let
\begin{align}
    h_i &:= \frac{1}{\mathbb{E} (Z)^2} \sum\limits_{|T \cap U| = i} \mathbb{E} (Z_{T, m_t}Z_{U, m_U}) - \mathbb{E} (Z_{T, m_T}) \mathbb{E} (Z_{U, m_U}) \label{h_def} \\
    &= \left(\frac{1}{|\mathbb{T}|}\right)^2 \sum\limits_{|T \cap U| = i} \left(\frac{\mathbb{E} (Z_{U, m_U} | Z_{T, m_T}=1)}{\mathbb{E} (Z_{T, m_T})} - 1\right). \label{h_formula}
\end{align}
Applying the second moment method we get $\mathbb{P} (Z=0) \leq {\sf Var}[Z] /\mathbb{E} [Z]^2$. Therefore, our goal is to check the following condition:
$$\sum\limits_{i=0}^{\tilde{s}-1} h_i = o(1).$$
We emphasize that $\mathbb{E} (Z)$ can be significantly different from $\mathbb{E} (X)$ here. We consider three ranges of $i$, for each of which we will establish the desired bounds for the sum of $h_i$:

\subsection{Case $1$: $i < \left(\frac{1}{\log(n)p)}\right)^{\frac{1}{k-1}}$}

Fix a pair $(T, m_T) \in \mathbb{T}$ and define event $\mathcal{E} := \{Z_{T, m_T} = 1\}$. In other words, $\mathcal{E}$ is the event that $T$ gives an augmented independent set with the prescribed $k$-matching $m_T$. Let also $\mathcal{E}_U := \{Z_{U, m_U} = 1\}$. Recalling~\eqref{h_formula}, we seek to bound
\begin{equation}
    \nonumber
    h_i = \frac{1}{|\mathbb{T}|} \sum\limits_{U: |T \cap U|=i} \left(\frac{\mathbb{P} (\mathcal{E_U} \text{ }|\text{ } \mathcal{E})}{\mathbb{P} (\mathcal{E})} - 1\right).
\end{equation}
Now, for $\ell=0, 1, \ldots, r$ let $\mathbb{T}_{\ell}$ be the collection of pairs $(U, m_U)$ with the property that $m_T$ and $m_U$ have exactly $\ell$ edges in common. Also define

$$B = \left\lceil \frac{1}{p^{\frac{1}{k}} \log (n)^{\frac{1}{k}}} \right\rceil,$$
and, for $B < i < B^{\frac{k}{k-1}}$, let $\mathbb{T}_{i, \ell}$ be the collection of pairs $(U, m_U) \in \mathbb{T_{\ell}}$ such that $|T \cap U| = i$. Finally, let $\mathbb{T}'_{\ell}$ be the collection of pairs $(U, m_U) \in \mathbb{T}_{\ell}$ with the additional property $|T \cap U| \leq B$. We make two auxiliary claims:
\begin{lemma}
\label{lm:prob_relations}
    If $(U, m_U) \in \mathbb{T}_{i, \ell}$ and $i < \left(\frac{1}{\log (n)p}\right)^{1/(k-1)}$ then
    $$\mathbb{P} (\mathcal{E_U}\text{ } |\text{ } \mathcal{E}) \leq  \exp\{O(p^2s^{2k-1}i^{k-1}/n^{k-1})+o(1)\} \frac{\mathbb{P}(\mathcal{E})}{p^{\ell}(1-p)^{i \choose k}}.$$
\end{lemma}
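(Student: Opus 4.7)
My plan is to compute $\mathbb{P}(\mathcal{E}_U\mid\mathcal{E})$ by decomposing the event $\mathcal{E}_U$ into (i) presence of the matching edges of $m_U$, (ii) non-edge status of the remaining $k$-subsets of $U$, and (iii) the augmentation condition at each vertex $v\notin U$, and then comparing the resulting product with the expansion of $\mathbb{P}(\mathcal{E})$ given in~\eqref{E-formula}.

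For the algebraic pieces (i) and (ii), the key bookkeeping is: among the $r$ edges of $m_U$, exactly $\ell$ coincide with edges of $m_T$ (already forced by $\mathcal{E}$), and the remaining $r-\ell$ lie outside $T$ and contribute the fresh factor $p^{r-\ell}$; similarly, among the $\binom{\tilde{s}}{k}-r$ non-edges required by $\mathcal{E}_U$, exactly $\binom{i}{k}-\ell$ are already non-edges under $\mathcal{E}$. Multiplying out and dividing by $\mathbb{P}(\mathcal{E})/(p^{\ell}(1-p)^{\binom{i}{k}})$ leaves the innocuous leftover $(1-p)^{\ell}=1+o(1)$ (since $\ell\le r=o(p^{-1})$ by~\eqref{r_order}) times the augmentation ratio $G/F(n,k,s,r)^{n-\tilde{s}}$, which is the sole nontrivial quantity to control.

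I would split the product defining $G$ by vertex type. Vertices $v\in U\setminus T$ carry no new constraint beyond $\mathcal{E}$. For $v\in T\setminus U$ each factor is either $F(n,k,s,r)$ (if no edge of $m_T$ through $v$ lies entirely in $T\cap U$) or $1-o(1)$ (otherwise); since $|T\setminus U|=\tilde{s}-i$ and the second alternative occurs for at most $kr$ vertices, this contributes $F(n,k,s,r)^{\tilde{s}-i}\cdot e^{o(1)}$. The delicate case is $v\in V\setminus(T\cup U)$: introducing independent binomials $X\sim\mathrm{Bin}(\binom{i}{k-1},p)$ and $Y,Z\sim\mathrm{Bin}(\binom{\tilde{s}}{k-1}-\binom{i}{k-1},p)$ counting edges from $v$ into $T\cap U$, $T\setminus U$, and $U\setminus T$ respectively, the per-vertex factor equals $\mathbb{P}(X+Z\ge 2\mid X+Y\ge 2)$. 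Under the hypothesis $i<(p\log n)^{-1/(k-1)}$, the parameter $Mp:=\binom{i}{k-1}p$ is $O(1/\log n)$, so expanding both numerator and denominator to first order in $Mp$ yields
\[
\mathbb{P}(X+Z\ge 2\mid X+Y\ge 2)\le F(n,k,s,r)\cdot\Bigl(1+O\bigl(Mp^2\binom{\tilde{s}}{k-1}/F(n,k,s,r)\bigr)\Bigr),
\]
the dominant correction coming from the event that exactly one of the two required edges to $U$ is shared with $T$.

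Raising this per-vertex estimate to the $(n-\tilde{s})$-th power via $(1+x)^n\le e^{nx}$ and substituting $\binom{\tilde{s}}{k-1}\sim s^{k-1}/(k-1)!$, $M\sim i^{k-1}/(k-1)!$ together with $F(n,k,s,r)=1-o(1)$ produces the advertised factor $\exp\{O(p^2 s^{2k-1} i^{k-1}/n^{k-1})+o(1)\}$. The main obstacle I expect is the tight first-order expansion of $\mathbb{P}(X+Z\ge 2\mid X+Y\ge 2)$: one must verify that the single-shared-edge contribution is the leading correction (producing the exponent $i^{k-1}$ and not $i^{2(k-1)}$), using the smallness $Mp\ll 1$ to discard higher-order terms; the rest of the argument is a careful but routine accounting.
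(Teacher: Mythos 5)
There is a genuine gap in your plan, and it is precisely the piece the paper spends most of its effort on. You assert that vertices $v\in U\setminus T$ ``carry no new constraint beyond $\mathcal{E}$,'' and in the algebraic bookkeeping of step (ii) you implicitly treat all the ``new'' non-edges of $U$ (those involving a vertex of $U\setminus T$) as fresh, independent $(1-p)$-factors. That is not legitimate: conditioning on $\mathcal{E}$ includes, for each $v\in U\setminus T$, the \emph{increasing} event ``$v$ has at least two edges into $T$.'' Since $T\cap U\subset T$, this conditioning inflates the probability that $v$ has an edge into $T\cap U$ — and such an edge lies entirely inside $U$, so it violates the non-edge requirement of $\mathcal{E}_U$ unless it happens to be the specific matched edge $e(v)\in m_U$. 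In other words, $\mathcal{E}$ and $\mathcal{E}_U$ are negatively correlated through the vertices of $U\setminus T$, and the non-edge count $\binom{\tilde s}{k}-r-\binom{i}{k}+\ell$ cannot simply be multiplied through with $(1-p)$.

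The paper handles exactly this difficulty with a careful edge-revelation argument: for each $v\notin T$ it exposes the potential edges from $v$ into $T$ one at a time, starting with those hitting $T\setminus U$, and stops once two are found. It then conditions on the set $R\subseteq U\setminus T$ of vertices for which this process was forced to look inside $T\cap U$, shows that $\mathcal{E}_U$ is only compatible with $R\subseteq W$ (the vertices whose revealed edge is an $m_U$-edge), pays a $\binom{i}{k-1}^{-1}$ compatibility cost per such vertex, and separately controls via a Chernoff bound (the event $\mathcal{I}$) the number of vertices outside $T$ whose revelation reaches $T\cap U$. The resulting geometric sum over $R$ is $1+o(1)$, so your intuition that the net contribution from $U\setminus T$ is negligible is in the end correct — but this is a non-trivial conclusion, not a free observation. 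Your binomial $(X,Y,Z)$ modelling of the vertices in $V\setminus(T\cup U)$ and the final exponentiation match the paper's estimate in spirit; it is the missing treatment of $U\setminus T$ (and the associated compatibility accounting with $m_U$) that would need to be supplied for the argument to close.
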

\begin{proof}
    In order to consider the conditioning on $\mathcal{E}$, we define an additional parameter $w$ as the number of edges in $m_U$ that have exactly one vertex out of $T \cap U$. And let $W$ the set of vertices in $U \backslash T$ that are included in such edges:
    $$W = \{v \in U \backslash T: \exists e\in E(H) \cap m_U \text{ such that } e \backslash T=v\}.$$
    For each vertex $v \in W$ let $e(v)$ be the edge of $m_U$ that contains $v$.
    
    We condition as follows. First, we simply condition on all $k$-vertex subsets within $T$ appearing as edges and non-edges as required for the event $\mathcal{E}$. But we proceed more carefully for vertices $v$ that are not in $T$. For such vertices we must reveal information about the edges from $v$ into $T$. We do this by observing whether or not the potential edges of the form $e=(v, u_1, u_2, \ldots, u_{k-1})$, where $u_1, u_2, \ldots, u_{k-1}  \in T$, actually are hypergraph edges one at a time, starting with potential edges containing vertices from $T \backslash U$. And we stop as soon as we observe at least two such edges. For vertices $v \in W$ we also add the restriction that the edge $e(v) \in m_U$ is the last edge observed in this process. Thus, for a vertex $v \in U \backslash T$, this process either observes two edges containing $v$ and at least one vertex from $T \backslash U$ --- and therefore observes no potential edges inside $U$ that contain $v$ or the process observes at most one edge intersecting $T \backslash U$ and therefore observes edges between $v$ and $T\cap U$ and reveals that at least one such edge appears. In the latter case the conditional probability of $\mathcal{E}_U$ is zero unless this vertex $v$ is in $W$ and the observed edge is $e(v)$.
    
    For each set $R \subset U \backslash T$ we define $\mathcal{F}_R$ to be the event that $\mathcal{E}$ holds and $R$ is the set of vertices $v \in U \backslash T$ with the property that our process reveals edges inside $U$. Note that we have
    $$\mathcal{E} = \bigcup\limits_{R \subseteq U\backslash T} \mathcal{F_R}, \text{ and }$$
    $$R \nsubseteq W \quad \Rightarrow \quad \mathbb{P}(\mathcal{E_T}\cap \mathcal{F_R})=0.$$

    Next, let $I = T \cap U$. We define $\mathcal{I}$ to be the event that the process detailed above resorts to revealing potential edges between $v$ and $\mathcal{I}$ for more than $\log^2 (n) s$ vertices $v \notin T$. Note that the probability that a particular vertex $v \notin T$ resorts to observing potential edges inside $U$ is
    $$1 - \frac{1-(1-p)^{{\tilde{s} \choose k-1} - {i \choose k-1}} - p\left({\tilde{s} \choose k-1} - {i \choose k-1}\right)(1-p)^{{\tilde{s} \choose k-1} - {i \choose k-1}-1}}{1-(1-p)^{\tilde{s} \choose k-1} - p{\tilde{s} \choose k-1}(1-p)^{{\tilde{s} \choose k-1} - 1}} \stackrel{\eqref{1_minus_p}}{=} O\left(\frac{(\log n)s^k}{n^k}\right),$$
    where we again used the fact that $F(n, k, s, r) = 1-o(1)$. Thus, the expected number of vertices that observe edges to $I$ is $O\left(\frac{(\log n) s^{k+1}}{n^k}\right) = o(s)$. As these events are independent, the Chernoff bound (see ~\cite[Corollary 2.4]{janson}) then implies
    \begin{equation}
    \label{prob_i}
        \mathbb{P}(\mathcal{I}) \leq \exp\{-\log^2 (n) s\}.
    \end{equation}
    
    Finallly, we are ready to put everything together. Applying the law of total probability, we have
    \begin{align*}
    \mathbb{P}(\mathcal{E}_U\text{ } |\text{ } \mathcal{E}) &= \sum\limits_{R \subseteq U \backslash T}\left(\mathbb{P}(\mathcal{E}_U\text{ } |\text{ } \mathcal{F}_R \cap \mathcal{I}) \cdot \frac{\mathbb{P}(\mathcal{F}_R \cap \mathcal{I})}{\mathbb{P}(\mathcal{E})} + \mathbb{P}(\mathcal{E}_U \text{ }| \text{ }\mathcal{F}_R \cap \overline{\mathcal{I}}) \cdot \frac{\mathbb{P}(\mathcal{F}_R \cap \overline{\mathcal{I}})}{\mathbb{P}(\mathcal{E})}\right) \\
    &\leq \frac{\mathbb{P}(\mathcal{I})}{\mathbb{P}(\mathcal{E})} +  \sum\limits_{R \subseteq U \backslash T}\mathbb{P}(\mathcal{E}_U\text{ } |\text{ } \mathcal{F}_R \cap \overline{\mathcal{I}}) \frac{\mathbb{P}(\mathcal{F}_R)}{\mathbb{P}(\mathcal{E})} \\
    &= \frac{\mathbb{P}(\mathcal{I})}{\mathbb{P}(\mathcal{E})} +  \sum\limits_{R \subseteq W}\mathbb{P}(\mathcal{E}_U\text{ } |\text{ } \mathcal{F}_R \cap \overline{\mathcal{I}}) \frac{\mathbb{P}(\mathcal{F}_R)}{\mathbb{P}(\mathcal{E})}.
    \end{align*}
   Let us estimate the factors of the sum's terms. Provided that $R \subseteq W$:
    \begin{align*}
        \frac{\mathbb{P}(\mathcal{F}_R)}{\mathbb{P}(\mathcal{E})} &\leq \left(\frac{(1-p)^{{\tilde{s} \choose k-1} - {i \choose k-1}} + p\left({\tilde{s} \choose k-1} - {i \choose k-1}\right)(1-p)^{{\tilde{s} \choose k-1} - {i \choose k-1}-1}}{1-(1-p)^{\tilde{s} \choose k-1} - p{\tilde{s} \choose k-1}(1-p)^{{\tilde{s} \choose k-1} - 1}}\right)^{|R|} \\
        &= \left((1+o(1)) \frac{ps^{k+1}}{e^kn^k}\right)^{|R|},
    \end{align*}
    \begin{align*}
        \mathbb{P} (\mathcal{E_U} \text{ } |\text{ }  \mathcal{F}_R \cap \overline{\mathcal{I}}) &\leq {i \choose k-1}^{-|R|}(1-p)^{{\tilde{s} \choose k} - {i \choose k} - {i \choose k-1}|R| - (r-|R|-\ell)}p^{r-|R|-\ell} \times \\ &\quad\quad\quad\quad\quad \times\left(1-(1-p)^{{\tilde{s} \choose k-1}} - p{{\tilde{s} \choose k-1}}(1-p)^{{{\tilde{s} \choose k-1}}-1}\right)^{n-2\tilde{s}+i-\log (n)^2 s} \\
        &\leq \frac{\mathbb{P}(\mathcal{E})}{p^{\ell}(1-p)^{i \choose k}} \left({i \choose k-1}p\right)^{-|R|} \exp\left\{2p{i \choose k-1}r\right\} \times \\
        &\quad\quad\quad\quad\quad \times \exp\left\{O\left(\frac{s^{2k-1}p}{n^k}\right)\left(\tilde{s}+\log (n)^2 s \right)\right\} \\
        &\leq (1+o(1)) \frac{\mathbb{P}(\mathcal{E})}{p^{\ell}(1-p)^{i \choose k}} \left({i \choose k-1}p\right)^{-|R|} \exp\left\{2p{i \choose k-1}r\right\},
    \end{align*}
where we use $|R| \leq r$ and $s^{2k}p < n^{k-\varepsilon}$. Thus
\begin{align*}
    \sum\limits_{R \subseteq W}&\mathbb{P}(\mathcal{E}_U\text{ } |\text{ } \mathcal{F}_R \cap \overline{\mathcal{I}}) \frac{\mathbb{P}(\mathcal{F}_R)}{\mathbb{P}(\mathcal{E})} \\
    &\stackrel{\eqref{r_M}}{\leq} \frac{(1+o(1)) \exp\left\{O\left(p^2i^{k-1}s^{2k-1} /n^{k-1}\right)\right\} \mathbb{P}(\mathcal{E})}{p^{\ell}(1-p)^{i \choose k}} \sum\limits_{j=0}^w {w \choose j} \left((1+o(1))\frac{s^{k+1}}{e^kn^k{i \choose k-1}}\right)^j \\
    &\text{\hspace{5pt}}\leq\text{\hspace{6pt}} \frac{(1+o(1)) \exp\left\{O\left(p^2i^{k-1}s^{2k-1} /n^{k-1}\right)\right\} \mathbb{P}(\mathcal{E})}{p^{\ell}(1-p)^{i \choose k}},
\end{align*}
where we use $w \leq i$ to bound the sum.
Finally, using~\eqref{prob_i}, we get:
$$\frac{\mathbb{P}(\mathcal{I})}{\mathbb{P}(\mathcal{E})^2} \leq \exp\{-\log^2 (n) s + O(\log (n)s)\} = o(1),$$
which completes the proof of the Lemma.
\end{proof}
\begin{lemma}
\label{lm:t_relations}
For $\ell = 1, 2, \ldots, r$ we have
$$|\mathbb{T}_{\ell}| \leq |\mathbb{T}| \left(\frac{2k!r^2}{n^k}\right)^{\ell};$$
furthemore, we have
$$|\mathbb{T}_{\ell, i}| \leq |\mathbb{T}| \left(\frac{2k!r^2i^k}{\tilde{s}^{2k}}\right)^{\ell} \left(\frac{e\tilde{s}^2}{ni}\right)^i.$$
\end{lemma}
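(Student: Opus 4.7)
План доказательства --- прямая комбинаторная оценка обеих величин через разложение выбора пары $(U, m_U)$, деление на явную формулу для $|\mathbb{T}|$ и асимптотические упрощения биномиальных коэффициентов.

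Для первого неравенства пара $(U, m_U) \in \mathbb{T}_\ell$ задаётся выбором $\ell$ общих рёбер из $m_T$ ($\binom{r}{\ell}$ способов), выбором оставшихся $r-\ell$ рёбер $m_U$ как $k$-сочетания на $n-k\ell$ вершинах вне общих рёбер (не более $\binom{n-k\ell}{k}^{r-\ell}/(r-\ell)!$ способов) и, наконец, $\tilde{s}-kr$ вершин $U$ вне $m_U$ ($\binom{n-kr}{\tilde{s}-kr}$ способов). Поскольку $|\mathbb{T}| = \frac{n!}{(n-kr)!(k!)^r r!}\binom{n-kr}{\tilde{s}-kr}$, сокращение общего множителя даёт
\[
\frac{|\mathbb{T}_\ell|}{|\mathbb{T}|} \leq \binom{r}{\ell}(k!)^\ell \frac{r!}{(r-\ell)!} \cdot \frac{1}{n(n-1)\cdots(n-k\ell+1)}.
\]
В силу~\eqref{r_order} имеем $(kr)^2/n = o(1)$, поэтому знаменатель не меньше $n^{k\ell}(1-o(1))$. Используя $\binom{r}{\ell} r!/(r-\ell)! \leq r^{2\ell}/\ell!$ и поглощая $(1+o(1))/\ell!$ в множитель $2^\ell$ (что корректно для всех $\ell \geq 1$ при достаточно больших $n$; случай $\ell = 0$ тривиален, так как правая часть равна $1 \geq |\mathbb{T}_0|/|\mathbb{T}|$), получаем требуемое $|\mathbb{T}_\ell|/|\mathbb{T}| \leq (2k!r^2/n^k)^\ell$.

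Для второго неравенства применяем аналогичное разложение с дополнительной фиксацией $|T \cap U| = i$: выбираем $\ell$ общих рёбер ($\binom{r}{\ell}$), $i-k\ell$ оставшихся вершин пересечения из $T$ не в общих рёбрах ($\binom{\tilde{s}-k\ell}{i-k\ell}$), $\tilde{s}-i$ вершин $U \setminus T$ ($\binom{n-\tilde{s}}{\tilde{s}-i}$) и $r-\ell$ остальных рёбер $m_U$ как $k$-сочетания на $\tilde{s}-k\ell$ вершинах множества $U$ вне общих рёбер ($\leq \binom{\tilde{s}-k\ell}{k}^{r-\ell}/(r-\ell)!$). Деление на $|\mathbb{T}| = \binom{n}{\tilde{s}}\tilde{s}!/((\tilde{s}-kr)!(k!)^r r!)$ и группировка множителей сводят задачу к трём асимптотическим оценкам: во-первых, $(k!)^{r-\ell}\binom{\tilde{s}-k\ell}{k}^{r-\ell}(\tilde{s}-kr)!/\tilde{s}! \leq (1+o(1))/\tilde{s}^{k\ell}$ из $\tilde{s}(\tilde{s}-1)\cdots(\tilde{s}-kr+1) \geq \tilde{s}^{kr}(1-o(1))$ при $kr = o(\tilde{s})$; во-вторых, $\binom{\tilde{s}-k\ell}{i-k\ell}/\binom{\tilde{s}}{i} \leq (1+o(1))(i/\tilde{s})^{k\ell}$ через прямое сравнение биномиальных коэффициентов; в-третьих, гипергеометрической оценки $\binom{\tilde{s}}{i}\binom{n-\tilde{s}}{\tilde{s}-i}/\binom{n}{\tilde{s}} \leq (e\tilde{s}^2/(ni))^i$, следующей из доминирования гипергеометрического распределения биномиальным $\mathrm{Bin}(\tilde{s}, \tilde{s}/n)$ и оценки $\binom{\tilde{s}}{i} \leq (e\tilde{s}/i)^i$.

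Перемножение этих оценок и поглощение $(1+o(1))$-поправок в множители $2^\ell$ (для $\ell \geq 1$) или, соответственно, в константу $e$ (для $\ell = 0$, $i \geq 1$, пользуясь запасом от неравенства Стирлинга $i! \geq \sqrt{2\pi i}(i/e)^i$) даёт искомое $|\mathbb{T}_{\ell,i}|/|\mathbb{T}| \leq (2k!r^2i^k/\tilde{s}^{2k})^\ell(e\tilde{s}^2/(ni))^i$. Основная техническая трудность --- аккуратное отслеживание $(1+o(1))$-поправок и проверка применимости асимптотик --- не представляет принципиальных препятствий, поскольку все необходимые условия ($kr = o(\tilde{s})$, $k\ell = o(\tilde{s})$, $\tilde{s} = o(n)$, $(kr)^2/n = o(1)$) автоматически выполнены в силу~\eqref{r_order} и асимптотики~\eqref{main_fm}.
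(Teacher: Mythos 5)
Your proposal follows essentially the same approach as the paper: both bound $|\mathbb{T}_\ell|$ and $|\mathbb{T}_{\ell,i}|$ by multiplying the number of ways to choose the $\ell$ common edges, the remaining matching edges, and the remaining vertices of $U$, then divide by the explicit formula for $|\mathbb{T}|$ and simplify the resulting products of falling factorials under $kr = o(\tilde{s})$, $\tilde{s}=o(n)$, absorbing $(1+o(1))$ factors into the constant $2^\ell$. Two small cautions. First, your intermediate bound $|\mathbb{T}_\ell|/|\mathbb{T}| \leq \binom{r}{\ell}(k!)^\ell \tfrac{r!}{(r-\ell)!}\cdot\tfrac{1}{n(n-1)\cdots(n-k\ell+1)}$ does not literally follow from the factorization you wrote (the factor $(n-k\ell)^{k(r-\ell)}/[(n-k\ell)\cdots(n-kr+1)]$ you discard is $\geq 1$, not $\leq 1$), though it is $1+o(1)$ and is indeed absorbed as you say. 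Second, the hypergeometric PMF is not pointwise dominated by the $\mathrm{Bin}(\tilde{s},\tilde{s}/n)$ PMF (it is in fact larger near the mode), so "domination" is not a valid justification; the inequality you actually need, $\binom{\tilde{s}}{i}\binom{n-\tilde{s}}{\tilde{s}-i}/\binom{n}{\tilde{s}} \leq \binom{\tilde{s}}{i}(\tilde{s}/n)^i$, is nevertheless exact and follows from the identity $\binom{n}{\tilde{s}}\binom{\tilde{s}}{i}=\binom{n}{i}\binom{n-i}{\tilde{s}-i}$ together with $\binom{\tilde{s}}{i}/\binom{n}{i}\leq(\tilde{s}/n)^i$ and $\binom{n-\tilde{s}}{\tilde{s}-i}\leq\binom{n-i}{\tilde{s}-i}$, which is how the paper obtains it. With those two points repaired, the argument is sound.
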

\begin{proof}
    $$|\mathbb{T}_{\ell}| \leq {r \choose \ell}{n - k\ell \choose \tilde{s} - k\ell}\frac{(\tilde{s}-k\ell)!}{(\tilde{s}-kr)!(k!)^{r-\ell}(r-\ell)!},$$
therefore
$$\frac{|\mathbb{T}_{\ell}|}{|\mathbb{T}|} \leq {r \choose \ell} \frac{\tilde{s}(\tilde{s}-1)\ldots (\tilde{s}-k\ell+1)}{n(n-1)\ldots (n-k\ell+1)} \times \frac{(k!)^{\ell} \times r(r-1)\ldots (r-\ell+1)}{\tilde{s}(\tilde{s}-1)\ldots (\tilde{s}-k\ell+1)} \leq \left(\frac{2k!r^2}{ n^k}\right)^{\ell}.$$

The calculation of the second part of the Lemma is similar.
$$|\mathbb{T}_{\ell, i}| \leq {r \choose \ell} {\tilde{s}-k\ell \choose i-k\ell}{n-\tilde{s} \choose \tilde{s}-i} \frac{(\tilde{s}-k\ell)!}{(\tilde{s}-kr)!(k!)^{r-\ell}(r-\ell)!},$$
therefore
\begin{align*}
    \frac{|\mathbb{T}_{\ell, i}|}{|\mathbb{T}|} &\leq \frac{{r \choose \ell} {\tilde{s}-k\ell \choose i-k\ell}{n-\tilde{s} \choose \tilde{s}-i} {\tilde{s} \choose i}}{{n \choose i}{n-i \choose \tilde{s}-i}}\times \frac{(k!)^{\ell} \times r(r-1)\ldots (r-\ell+1)}{\tilde{s}(\tilde{s}-1)\ldots (\tilde{s}-k\ell+1)} \\
   & \leq {r \choose \ell} \left(\frac{e\tilde{s}^2}{ni}\right)^i \left(\frac{i}{\tilde{s}}\right)^{k\ell} \times \frac{(k!)^{\ell} \times r(r-1)\ldots (r-\ell+1)}{\tilde{s}(\tilde{s}-1)\ldots (\tilde{s}-k\ell+1)} \\
   &\leq \left(\frac{2k!r^2i^k}{\tilde{s}^{2k}}\right)^{\ell} \left(\frac{e\tilde{s}^2}{ni}\right)^i.
\end{align*}
\end{proof}
We are now ready to check $\sum\limits_{i=0}^{1/\sqrt[k-1]{p \log n}} h_i = o(1)$ using Lemmas~\ref{lm:prob_relations} and~\ref{lm:t_relations}. We will split this into two sub-ranges. \\

{\bf{Subcase 1.1: ${i \leq B}$}}. In this case by Lemma~\ref{lm:prob_relations} we have $\mathbb{P} (\mathcal{E_U} \text{ } | \text{ } \mathcal{E}) \leq  (1+o(1)) \mathbb{P}(\mathcal{E}) p^{-\ell}$, hence:
\begin{align*}
    \sum\limits_{i=0}^B h_i &\hspace{5pt}\leq\hspace{4pt} \frac{1}{|\mathbb{T}|} \sum\limits_{\ell=0}^r \sum\limits_{(U, m_U) \in \mathbb{T}_{\ell}'} \left(\frac{\mathbb{P} (\mathcal{E_U} \text{ } | \text{ } \mathcal{E})}{\mathbb{P}(\mathcal{E})}-1\right) \\
    &\stackrel{\text{L.}\ref{lm:t_relations}}{\leq} \frac{o(|\mathbb{T}_0'|)}{|\mathbb{T}|} + \sum\limits_{\ell=1}^r \left(\frac{3k!r^2}{pn^k}\right)^{\ell} \\
    &\stackrel{\eqref{r_M}}{\leq} o(1)+\sum\limits_{r=1}^{\ell} \left(n^{-\frac{2k-2}{k+1}}\right)^{\ell} \\
    &\hspace{6pt}=\hspace{6pt}o(1).
\end{align*}
{\bf{Subcase 1.2: $B < i < B^{\frac{k}{k-1}}$}}.
\begin{align*}
    h_i &\text{\hspace{5pt}}= \hspace{4pt} \frac{1}{|\mathbb{T}|} \sum\limits_{\ell=0}^r \sum\limits_{(U, m_U) \in \mathbb{T}_{\ell, i}} \left(\frac{\mathbb{P}(\mathcal{E_U} \text{ } | \text{ } \mathcal{E})}{\mathbb{P} (\mathcal{E})} - 1\right) \\
    &\stackrel{\text{L.}\ref{lm:prob_relations}}{\leq} \sum\limits_{\ell=0}^r \left(\frac{|\mathbb{T}_{\ell, i}|}{|\mathbb{T}|}\right) \frac{\exp\{O(p^2s^{2k-1}i^{k-1}/n^{k-1})+o(1)\}}{p^{\ell}(1-p)^{i \choose k}} \\
    &\stackrel{\text{L.}\ref{lm:t_relations}}{\leq}\sum\limits_{\ell=0}^r \left(\frac{2k!r^2i^k}{\tilde{s}^{2k}}\right)^{\ell} \left(\frac{e\tilde{s}^2}{ni}\right)^i\left(\frac{2\exp\{O(p^2s^{2k-1}i^{k-1}/n^{k-1})\}}{p^{\ell}(1-p)^{i \choose k}}\right) \\
    &\text{\hspace{5pt}}\leq \hspace{4pt}\left(\frac{e^2\tilde{s}^2}{ni(1-p)^{f(i-1)/k!}}\right)^i \sum\limits_{\ell=0}^r \left(\frac{2k!r^2}{\tilde{s}^{k}p}\right)^{\ell} \\
    &\stackrel{\eqref{1_minus_p_small_power}}{\leq} \left(\frac{e^3\tilde{s}^2 p^{1/k}(\log (n))^{1/k}}{n}\right)^i \sum\limits_{\ell=0}^r \left(\frac{2k!r^2}{\tilde{s}^{k}p}\right)^{\ell}.
\end{align*}
Since $\frac{\tilde{s}^2p^{\frac{1}{k}}\log(n)^{1/k}}{n} = o(1)$ and $\frac{r^2}{\tilde{s}^kp} \stackrel{\eqref{r_M}}{=} O\left(\frac{\tilde{s}^{3k-2}p}{n^{2k-2}}\right) = o(1)$, we have
$$\sum\limits_{i=B}^{B^{\frac{k}{k-1}}} h_i = o(1).$$

\subsection{Case 2: $ \left(\frac{1}{\log(n)p)}\right)^{\frac{1}{k-1}} \leq i <  f^{-1}((1-\varepsilon)f(\tilde{s}))$}

This case is seemingly the simplest to analyze, as we do not need to use the condition of vertices outside $T$ having at least two neighbors in $T$. We begin by bounding $\frac{1}{\mathbb{E}(Z_{T, m_T})^2} \sum\limits_{m_T, m_U} \mathbb{E}(Z_{T, m_T} Z_{U, m_U})$ for a fixed pair of sets $T, U$ intersecting in exactly $i$ vertices. If $T \cap U$ contains exactly $\ell$ edges, a bound for the number of ways to choose $m_T$ and $m_U$ is $\tilde{s}^{2kr-k\ell}$ (we have $2r-\ell$ edges total, each has less than $\tilde{s}^{k}$ choices). Thus, we have
\begin{equation}
\begin{gathered}
    \nonumber
    \sum\limits_{m_T, m_U} \mathbb{E}(Z_{T, m_T} Z_{U, m_U}) \leq \sum\limits_{\ell=0}^r \tilde{s}^{2kr-k\ell} p^{2r-\ell}(1-p)^{2{\tilde{s} \choose k} - {i \choose k} - 2r+\ell} \\
    = \tilde{s}^{2kr} p^{2r} (1-p)^{2{\tilde{s} \choose k} - {i \choose k} - 2r} \sum\limits_{\ell=0}^r \left(\frac{1-p}{\tilde{s}^kp}\right)^{\ell} = (1+o(1)) \tilde{s}^{2kr} p^{2r} (1-p)^{2{\tilde{s} \choose k} - {i \choose k} - 2r}.
\end{gathered}
\end{equation}
Therefore, we can write the following bound:
\begin{align*}
    \sum\limits_{m_T, m_U} \frac{\mathbb{E}(Z_{T, m_T} Z_{U, m_U})}{\mathbb{E}(Z_{T, m_T})^2} &\hspace{2pt}\stackrel{\eqref{pre_F}}{\leq} (1+o(1)) \tilde{s}^{2kr} (1-p)^{-{i \choose k}} e^{(2+o(1)) \frac{ps^{2k-1}}{(k-1)!e^kn^{k-1}}} \\
    &\stackrel{\eqref{r_M}}{\leq} (1-p)^{-{i \choose k}} e^{O(\log (n)r+1)}.
\end{align*}
It follows that
\begin{align*}
    h_i &\text{\hspace{4pt}}\leq \hspace{4pt}\frac{{\tilde{s} \choose i}{n-\tilde{s} \choose \tilde{s}-i}}{{n \choose \tilde{s}}} (1-p)^{-{i \choose k}} e^{O(\log (n)r+1)} \\
    &\text{\hspace{4pt}}\leq \hspace{4pt}\left(\frac{e\tilde{s}}{i}\right)^i \left(\frac{\tilde{s}}{n}\right)^i (1-p)^{-{i \choose k}} e^{O(\log (n)r+1)} \\
    &\text{\hspace{4pt}}\leq \hspace{4pt}\left[\frac{e\tilde{s}^2}{in} \left((1-p)^{-\frac{f(\tilde{s})}{k!}}\right)^{f(i-1)/f(\tilde{s})}\right]^i e^{O(\log (n)r+1)} \\ &\stackrel{\eqref{1_minus_p}}{\leq} \left[(1+o(1))\frac{e\tilde{s}^2 \log (n)^{1/(k-1)}p^{1/(k-1)}}{n} \left(\frac{ne}{\tilde{s}}\right)^{1-\varepsilon}\right]^i e^{O(\log (n)r+1)} \\
    &\text{\hspace{4pt}}\leq \hspace{4pt}\left[(1+o(1))\frac{e^{2-\varepsilon}\tilde{s}^{1+\varepsilon} \log (n)^{1/(k-1)}p^{1/(k-1)}}{n^{\varepsilon}}\right]^i e^{O(\log (n)r+1)}.
\end{align*}
As $r = o(s^{1/k}) = o(i)$ by~\eqref{r_order} and $\tilde{s}^{1+\varepsilon} \log(n)^{1/(k-1)} p^{1/(k-1)} n^{o(1)} = o(n^{\varepsilon})$, we have
$$\sum\limits_{i=1/(p \log n)^{1/(k-1)}}^{f^{-1}((1-\varepsilon)f(\tilde{s}))} h_i = o(1).$$

\subsection{Case 3: $i \geq f^{-1}((1-\varepsilon)f(\tilde{s}))$}

We calculate $h_i$ as defined in~\eqref{h_def}. For simplicity, let $j=\tilde{s}-i$. For some fixed $i$, there are exactly ${n \choose \tilde{s}}{n-\tilde{s} \choose j}{\tilde{s} \choose j}$ ways to choose $T$ and $U$. Now fix sets $T$ and $U$.

We now calculate the number of ways to choose the $k$-matching $m_T$ and $m_U$ that are compatible with the event $\{Z_{T, m_T}Z_{U, m_U}=1\}$. Let $Q_I$ be the set of $k$-matchings edges contained in $T \cap U$, and let $q_I := |Q_I|$ (so $q_I$ is the same as $\ell$ from previous cases). Secondly, let $Q_O$ be  the set of edges in $m_T$ that are not in $Q_I$, and let $Q_O'$ be the set of edges of $m_U$ that are not in $Q_I$, and define $q_O := |R_O|=|R_O'|$ ( `I' stands for `inner', and `O' stands for `outer'). From the definitions, it is clear that $r=q_I+q_O$. It is also obvious that the number of ways to choose edges in $Q_O$ is bounded by
$$2^j \tilde{s}^{(k-1)q_O},$$
since we can first choose a leader vertex in $T\backslash U$ for each edge, and then complete each leader to a complete edge in at most $\tilde{s}^{k-1}$ ways.
By the symmetry reason this same expression estimates the number of ways to choose
edges in $Q_O'$.

Next, the number of ways to choose edges in $Q_I$ is bounded by $\tilde{s}^{kq_R}/((k!)^{q_I}q_I!)$. Putting these together, the number of ways to choose $m_T$ adn $m_U$ is at most
\begin{equation}
    \label{m_choices}
    \frac{4^j\tilde{s}^{(2k-2)q_O+kq_R}}{(k!)^{q_I}q_I!} \leq \frac{4^j\tilde{s}^{kr+(k-2)q_O} (k!r)^{q_O}}{(k!)^r r!}
\end{equation}
(assuming that $r^{q_O} = 1$ if $r=q_O=0$).

Now we estimate the probability of the event $\{Z_{T, m_T}Z_{U, m_U} = 1\}$. The probability the edges within $T$ and within $U$ are chosen accordingly is 
$$p^{r+q_O}(1-p)^{2{\tilde{s} \choose k} - {i \choose k}-r-q_O}.$$
Now consider vertices in $T\backslash U$ and $U \backslash T$. Recall that from a vertex $v \in T \backslash U$, there are at least two edges leading into $U$ (i.e., intersecting $U$ in $k-1$ vertices). So if $v$ already has an edge in $m_T$ leading into $U$, then there is at least one edge from $v$ intersecting $U\backslash T$ (since $v$ contained in at most one edge of $m_T$). Such an edge appears with probability at most $1-(1-p)^{j{\tilde{s} \choose k-2}} \leq j{\tilde{s} \choose k-2}p$. If a vertex $v \in T\backslash U$ is not in an edge of $m_T$ going inside $U$, then it is  incident with at least two edges intersecting $U \backslash T$. The probability of these two edges appearing is at most $j^2{\tilde{s} \choose k-2}^2p^2$ by the union bound. Since the number of vertices in the first category is at most $q_O$, the total probability that edges from $T \backslash U$ into $U$ being chosen accordingly and vice versa is at most $\left(j{\tilde{s} \choose k-2}p\right)^{2j-q_O}$.
Hence, the probability that the edges within $T\cup U$ appear in accordance with the event $\{Z_{T, m_T}Z_{U, m_U} = 1\}$ is at most
$$p^{r+q_O}(1-p)^{2{\tilde{s} \choose k} - {i \choose k}-r-q_O} \left(j{\tilde{s} \choose k-2}p\right)^{2j-q_O}.$$
Finally, the probability that all vertices outside $T \cup U$ have at least two neighbours in $T$ is at most $\left(1-(1-p)^{\tilde{s} \choose k-1} - p{\tilde{s} \choose k-1}(1-p)^{{\tilde{s} \choose k-1}-1}\right)^{n-2\tilde{s}}$.
Note that, since
\begin{align*}
    \left(1-(1-p)^{\tilde{s} \choose k-1} - p{\tilde{s} \choose k-1}(1-p)^{{\tilde{s} \choose k-1}-1}\right)^{-s} &\text{\hspace{4pt}}\leq \hspace{4pt} \exp\left\{(1+o(1))\frac{s^kp}{(k-1)!}(1-p)^{s \choose k-1}\right\}  \\
    &\stackrel{\eqref{1_minus_p}}{\leq} \exp \left\{(1+o(1)) \frac{s^{2k}p}{(k-1)!e^kn^k}\right\} \\
    &\text{\hspace{4pt}}= \hspace{3pt}1+o(1),
\end{align*}
we have
\begin{align*}
    \mathbb{P}(Z_{T, m_T}&Z_{U, m_U} = 1) \leq (1+o(1)) \mathbb{P}(Z_{T, m_T} = 1) p^{q_O} (1-p)^{{\tilde{s} \choose k}-{i \choose k}-q_O} \left(j{\tilde{s} \choose k-2}p\right)^{2j-q_O} \\
&\stackrel{\text{L.}\ref{lm:bin_coeff_diff}}{\leq} (1+o(1)) \mathbb{P}(Z_{T, m_T} = 1) p^{q_O} (1-p)^{j(1-(1+1/k)\varepsilon)\frac{f(\tilde{s}-1)}{(k-1)!}-q_O} \left(j{\tilde{s} \choose k-2}p\right)^{2j-q_O}.
\end{align*}
Multiplying the probability estimate by the estimate for the number of choices for $m_T$ and $m_U$ given by~\eqref{m_choices} and summing over $q_O$ we see that for a fixed $T$ and $U$ we have
$$\frac{1}{\mathbb{P}(Z_{T, m_T}=1)}\sum\limits_{m_T, m_U} \mathbb{P}(Z_{T, m_T}Z_{U, m_U} = 1)$$
is at most
\begin{align*}
    &(1+o(1))\sum\limits_{q_O=0}^{\min\{r, j\}} \frac{4^j\tilde{s}^{kr+(k-2)q_O} (k!r)^{q_O}}{(k!)^r r!} p^{q_O} (1-p)^{j(1-(1+1/k)\varepsilon)\frac{f(\tilde{s}-1)}{(k-1)!}-q_O} \left(j{\tilde{s} \choose k-2}p\right)^{2j-q_O} \\
    &= (1+o(1)) \frac{\tilde{s}^{kr}}{(k!)^r r!} \left(4p^2j^2 {\tilde{s} \choose k-2}^2 (1-p)^{(1-(1+1/k)\varepsilon)\frac{f(\tilde{s}-1)}{(k-1)!}}\right)^j \sum\limits_{q_O=0}^{\min\{r, j\}} \left(\frac{\tilde{s}^{(k-2)}k!r}{j{\tilde{s} \choose k-2}(1-p)}\right)^{q_O} \\
    &\leq (1+o(1)) \frac{\tilde{s}^{kr}}{(k!)^r r!} \left(4p^2j^2 {\tilde{s} \choose k-2}^2 (1-p)^{(1-(1+1/k)\varepsilon)\frac{f(\tilde{s}-1)}{(k-1)!}}\right)^j \max\{1, (3k!(k-2)!r)^j\} \\
    &\leq (1+o(1)) \frac{\tilde{s}^{kr}}{(k!)^r r!} \left(12k!(k-2)!p^2j^2 {\tilde{s} \choose k-2}^2 (1-p)^{(1-(1+1/k)\varepsilon)\frac{f(\tilde{s}-1)}{(k-1)!}} \max\{1, r\}\right)^j \\
    &\leq \frac{(1+o(1))\tilde{s}!}{(\tilde{s}-kr)!(k!)^r r!} \left(12k!(k-2)!p^2j^2 {\tilde{s} \choose k-2}^2 (1-p)^{(1-(1+1/k)\varepsilon)\frac{f(\tilde{s}-1)}{(k-1)!}} \max\{1, r\}\right)^j,
\end{align*}
where we use $r=o(s^{1/k})$ by~\eqref{r_order} in the last step and assume $r^{q_O}=1$ if $r=q_O=0$. Recalling that $|\mathbb{T}| = {n \choose \tilde{s}} \frac{\tilde{s}!}{(\tilde{s}-kr)! (k!)^rr!}$ and that the number of choices $T, U$ is ${n \choose \tilde{s}}{n - \tilde{s} \choose j} {\tilde{s} \choose j}$, we get
\begin{align*}
    &h_i\mathbb{E} (Z) \leq \frac{1}{|\mathbb{T}| \mathbb{P}(Z_{T, m_T}=1)} \sum\limits_{|T \cap U|=i} \sum\limits_{m_T, m_U} \mathbb{P}(Z_{T, m_T}Z_{U, m_U}=1) \\
    &\text{\hspace{4pt}}\leq (1+o(1)) {n-\tilde{s} \choose j} {\tilde{s} \choose j} \left(12k!(k-2)!p^2j^2 {\tilde{s} \choose k-2}^2 (1-p)^{(1-(1+1/k)\varepsilon)\frac{f(\tilde{s}-1)}{(k-1)!}} \max\{1, r\}\right)^j \\ 
    &\text{\hspace{4pt}}\leq (1+o(1)) \left(\frac{n\tilde{s}e^2}{j^2}\right)^j \left(12k(k-1)p^2j^2 \tilde{s}^{2k-4} (1-p)^{(1-(1+1/k)\varepsilon)\frac{f(\tilde{s}-1)}{(k-1)!}} \max\{1, r\}\right)^j \\
    &\text{\hspace{4pt}}\leq (1+o(1)) \left(12k(k-1)e^2 n \tilde{s}^{2k-3} p^2 (1-p)^{(1-(1+1/k)\varepsilon)\frac{f(\tilde{s}-1)}{(k-1)!}} \max\{1, r\}\right)^j \\
    &\stackrel{\eqref{1_minus_p}}{\leq} (1+o(1)) \left(13k(k-1)e^{-(k-2)+(k+1)\varepsilon} n^{-(k-1)+(k+1)\varepsilon} \tilde{s}^{3k-3-(k+1)\varepsilon} p^2 \max\{1, r\}\right)^j.
\end{align*}
As $r=o(s^{1/k})$ and $n^{-(k-1)+(k+1)\varepsilon}s^{3k-3-(k+1)\varepsilon + 1/k}p^2 = o(1)$, we finally get $$\sum\limits_{i=f^{-1}((1-\varepsilon)f(s))}^{\tilde{s}-1} h_i = o(1).$$

\section{Two-point concentration for $n^{-(k-1)^2/k+\varepsilon}\ < p = o(1)$}
\label{sc: simple_sec}

In this section, we show that to prove two-point concentration of $\alpha(H(n, k, p))$ for sufficiently large $p$, it is sufficient to apply the second moment method to the random variable $Y_s$, which counts the number of \textit{maximal} independent sets of size $s$ in $H(n, k, p)$.

Recall that $X_s$ is the random variable which counts the number of regular independent sets of size $s$ in $H(n, k, p)$. However, in this section we define $s_x$ as the largest integer such that $\mathbb{E} (X_s) > \varepsilon \log(n)$. We have reduced the cut-off function in order that $s_x$ could still be represented in the form~\eqref{main_fm} (as can be easily verified), and $\mathbb{E}[X_{s_x}]=\omega(1)$. Also, for convenience, let $X=X_{s_x}$ and $Y=Y_{s_x}$.

\vspace{\baselineskip}

To work with $p$ close to a constant, we need to adapt the asymptotic expressions from Section~\ref{sc:tech} into inequalities. It is easy to verify that for larger $p$, expressions~\eqref{1_minus_p}-\eqref{pre_F} will become
\begin{equation}
\label{1_minus_p_ineq}
    (1-p)^{\frac{f(s)}{k!}} \leq (1+o(1)) \frac{s}{ne};
\end{equation}
\begin{equation}
\label{pre_exp_ineq}
    \left(1-(1-p)^{s \choose k-1}\right)^{n-s} \geq e^{-(1+o(1))\frac{s^k}{e^kn^{k-1}}}.
\end{equation}

However, we will also be interested in inequalities in the opposite direction:
\begin{align*}
    \frac{ne}{s}(1-p)^{\frac{f(s)}{k!}} &\geq (1+o(1))\frac{ne}{s} \exp[-\log d + \log (\log d)^{\frac{1}{k-1}} - \log\frac{e}{(k!)^{1/(k-1)}} - \\
    &\quad\quad\quad\quad\quad\quad\quad-p\log(d)+p\log(\log(d))^{1/(k-1)} + o(1)]\\
    &=(1+o(1))\left(\frac{\log (d)^{1/(k-1)}}{d}\right)^p \geq (1+o(1)) \frac{1}{n^p}.
\end{align*}
From which it follows that
\begin{equation}
\label{1_minus_p_ineq2}
    (1-p)^{\frac{f(s)}{k!}} \geq (1+o(1)) \frac{s}{en^{1+p}};
\end{equation}

\vspace{\baselineskip}

Now, let us proceed directly to the main theorem of this section:
\begin{theorem}
\label{th:simple_sec}
    Consider $p=p(n)$ such that $n^{-\frac{(k-1)^2}{k} + \varepsilon} < p < o(1)$ for some fixed $\varepsilon > 0$. Then $\alpha(H(n, k, p)) \in \{s_x, s_x+1\}$ a.a.s..
\end{theorem}

\begin{proof}
    Without loss of generality, we assume that $\varepsilon < \frac{1}{k(k+1)}$. Also, for convenience, set $s=s_x$.
    
    First, we show by the standard first moment method that no independent set of size $s+2$ appear whp:
    \begin{align*}
        \mathbb{P}(\alpha(H(n, k, p)) \geq s+2) &\hspace{4pt}\leq {n \choose s+2}(1-p)^{s+2 \choose k} \\
        &\hspace{4pt}= {n \choose s+1}(1-p)^{s+1 \choose k} \left(\frac{n-s-1}{s+2}(1-p)^{s+1 \choose k-1}\right) \\
        &\stackrel{\eqref{1_minus_p_ineq}}{\leq} (1+o(1)) {n \choose s+1} (1-p)^{s+1 \choose k} \frac{n}{s} \left(\frac{s}{ne}\right)^k  \\&\stackrel{\eqref{1_minus_p_ineq}}{=} o\left(\frac{s^{k-1}}{n^{(k-1)^2/k}}\right) = o(1),
    \end{align*}
where in the end we use the fact that $s^{k-1} = o\left(n^{(k-1)^2/k}\right)$ when $p > n^{-\frac{(k-1)^2}{k} + \varepsilon}$.

    Next, we show that an independent set of size $s$ exists whp, using the second moment method. But the variance of $X$ we are interested in is too large, that is why we work with the random variable $Y$ which counts the number of maximal independent sets of size $s$. Let us estimate the first moment of $Y$:
    \begin{equation}
    \label{y_exp}
        \begin{split}
        \mathbb{E} (Y) &\hspace{4pt}=\hspace{3pt} {n \choose s} (1-p)^{s \choose k} \left(1-(1-p)^{s \choose k-1}\right)^{n-s} \\
        &\stackrel{\eqref{pre_exp_ineq}}{\geq} {n \choose s} (1-p)^{s \choose k} e^{-O(s^k / n^{k-1})} = \mathbb{E} (X) (1-o(1)) = \omega(1).
    \end{split}
    \end{equation}

    By Chebyshev's Inequality, all that is left to show that is $\frac{Var(Y)}{\mathbb{E}(Y)^2} = o(1).$ We write $Y$ as the sum of indicator variables $Y_T$ over all sets $T$ of size $s$, where $Y_T$ is the indicator for the event that $T$ is a maximal independent vertex set. We define indicator variables $X_T$ for ordinary independent sets analogously. Then we have
    $$Var(Y)=\sum\limits_{T} Var(Y_T) + \sum\limits_{T \neq U} Cov(Y_T, Y_U) \leq \mathbb{E} (Y) + \sum\limits_{T \neq U} \left[\mathbb{E} (Y_T Y_U) - \mathbb{E}(Y_T)\mathbb{E}(Y_U)\right].$$
    Next, let $\mathbb{T}$ be the collection of all $s$-element vertex subsets of $H(n, k, p)$ and define, for all relevant $i$:
    \begin{align*}
        f_i &:= \frac{1}{\mathbb{E} (X)^2} \sum\limits_{|T \cap U|=i} \mathbb{E} (X_T X_U) = \frac{{s \choose i}{n-s \choose s-i}}{{n \choose s}} (1-p)^{-{i \choose k}},\\
        g_i &:= \frac{1}{\mathbb{E}(Y)^2} \sum\limits_{|T \cap U|=i} \mathbb{E} (Y_T Y_U), \text{ and}\\
    h_i &:= \frac{1}{\mathbb{E}(Y)^2} \sum\limits_{|T \cap U|=i} \mathbb{E} (Y_T Y_U)  - \mathbb{E}(Y_T) \mathbb{E}(Y_U) = \frac{1}{|\mathbb{T}|^2} \sum\limits_{|T \cap U|=i} \left(\frac{\mathbb{E} (Y_U | Y_T=1)}{\mathbb{E}(Y_T)} - 1\right).
    \end{align*}

Since $\mathbb{E}(Y)=\omega(1)$, our goal is to show that
$$\sum\limits_{i=1}^{k-1} h_i = o(1).$$
We consider four randes of $i$ for each of which we show that the sum of variables $h_i$ within that range is $o(1)$. For the second and fourth ranges, it will be sufficient for us to show this for the sum of incomplete covariances $g_i$ since it is obvious that $h_i \leq g_i$. For the third range, we will work with the sum $f_i$, noting that $Y_T \leq X_T$, and therefore, by the same argument as in inequality~\eqref{y_exp}, we have $h_i \leq g_i \leq f_i(1+o(1))$. Note that conceptually the proof for all cases fully coincides with the proof in Section~\ref{sc:second_moment}.
\\
Before analyzing the cases, let us estimate $g_i$ to obtain an expression analogous to the right-hand side of the formula for $f_i$ above.
 Consider two arbitrary sets of vertices $T$ and $U$ intersecting in $i$ vertices. Given that they are both independent sets, the probability that both sets are maximal independent is bounded above by the probability that for any vertex $x \in T\backslash U$, there is an edge $e \subset x \cup U$, $e \nsubseteq T$ that contain $x$.
 This probability is $\left(1-(1-p)^{{s \choose k-1} - {i \choose k-1}}\right)^{s-i} \leq \left(\left({s \choose k-1} - {i \choose k-1}\right)p\right)^{s-i} \leq \left((s-i){s-1 \choose k-2}p\right)^{s-i}$ (while it is, of course, bounded by 1). Therefore, we have
\begin{align*}
    g_i &\leq (1+o(1))\frac{{s \choose i}{n-s \choose s-i}}{{n \choose s}} (1-p)^{-{i \choose k}}  \min\left[\left((s-i){s-1 \choose k-2}p\right)^{s-i}, 1\right].
\end{align*}

So, let us start wit the first range $i < B:=(p\log (n))^{-1/k}$. Note that in this range we have $(1-p)^{-{i \choose k}} = 1+o(1)$. Therefore, $g_i \leq (1+o(1)) \frac{{s \choose i} {n-s \choose s-i}}{{n \choose s}}$ and we get
\begin{align*}
    h_i \leq \frac{o(1) {s \choose i} {n-s \choose s-i}}{{n \choose s}},
\end{align*}
and so $\sum\limits_{i=0}^B h_i = o(1)$.

Next, we consider $B < i \leq B^{k/(k-1)}$ (note that for large $p > 1/\log(n)$ this range can represent an empty set). We have:
$$g_i \leq \frac{{s \choose i} {n-s \choose s-i}}{{n \choose s}} \frac{1+o(1)}{(1-p)^{i \choose k}} \leq (1+o(1))\left(\frac{es^2}{ni(1-p)^{f(i)/k!}}\right)^i = (1+o(1)) \left(\frac{e^2s^2 p^{1/k} \log^{1/k} (n)}{n}\right)^i.$$
Since $\frac{s^2p^{1/k}\log^{1/k} (n)}{n} = o(1)$, then the sum of $g_i$ within the second range is $o(1)$.

Next, consider the range $(p \log n)^{-1/(k-1)} \leq i \leq f^{-1}((1-\varepsilon)f(s))$:
\begin{align*}
    f_i &\hspace{4pt}\leq\hspace{4pt} \left(\frac{es}{i}\right)^i \left(\frac{s}{n}\right)^i (1-p)^{-{i \choose k}}  \\
    &\hspace{4pt}\leq \hspace{4pt}\left[\frac{es^2}{ni} \left((1-p)^{-f(s)/k!}\right)^{f(i-1)/f(s)}\right]^i  \\
    &\stackrel{\eqref{1_minus_p_ineq2}}{\leq} \left[(1+o(1)) \frac{es^2 p^{1/(k-1)} \log^{1/(k-1)} (n)}{n} \left(\frac{en^{1+p}}{s}\right)^{1-\varepsilon}\right]^i \\
    &\hspace{4pt}=\hspace{4pt} \left[(1+o(1)) e^{2-\varepsilon} \frac{s^{1+\varepsilon}p^{1/(k-1)} \log^{1/(k-1)} (n)}{n^{\varepsilon}} n^{p(1-\varepsilon)}\right]^i.
\end{align*}
As $p=o(1)$ then $s^{1+\varepsilon}p^{1/(k-1)} \log^{1/(k-1)} (n) n^{p(1-\varepsilon)} = o(n^{\varepsilon})$ and consequently, the sum of $f_i$ over this range is again $o(1)$.

Finally, consider the range $i > f^{-1}((1-\varepsilon)f(s))$. We have
\begin{align*}
    g_i &\hspace{5pt}\leq\hspace{4pt} (1+o(1))\frac{{s \choose i}{n-s \choose s-i}}{{n \choose s}} (1-p)^{-{i \choose k}} \left((s-i){s-1 \choose k-2}p\right)^{s-i} \\
    &\hspace{5pt}\leq\hspace{4pt} \frac{(1+o(1))}{\mathbb{E}[X]} \left({s \choose s-i} \left((s-i)\frac{f(s)}{s(k-2)!}\right)^{s-i}\right) {n-s \choose s-i} (1-p)^{{s \choose k}-{i \choose k}} p^{s-i} \\
    &\stackrel{\text{L.} \ref{lm:bin_coeff_diff}}{\leq} \frac{(1+o(1))}{\mathbb{E}[X]} \left(\frac{e f(s)}{(k-2)!}\right)^{s-i} n^{s-i} (1-p)^{(s-i)(1-(1+1/k)\varepsilon)\frac{f(s-1)}{(k-1)!}} p^{s-i} \\
    &\hspace{2pt}\stackrel{\eqref{1_minus_p_ineq}}{\leq} \frac{1}{\mathbb{E}[X]} \left((1+o(1))\frac{ef(s) np}{(k-2)!} \left(\frac{s}{ne}\right)^{k(1-(1+1/k)\varepsilon)}\right)^{s-i}  \\
    &\hspace{5pt}\leq \hspace{4pt} \frac{1}{\mathbb{E}[X]} \left(k^2\log^3(n)n^{\frac{1}{k(k-1)}\varepsilon-\frac{k+1}{k-1}\varepsilon^2}\right)^{s-i}.
\end{align*}
Hence, for a sufficiently small constant $\varepsilon$ (we assumed it at the beginning of the proof), the sum $g_i$ over the last range is also $o(1)$, which completes the proof.

\end{proof}
\section{Absence of two-point concentration for $n^{-(k-1)} < p < n^{-\frac{(k-1)k}{k+1}}$
 $n^{-(k-1)} < p < n^{-\frac{(k-1)k}{k+1}}$}
\label{sc:anti}

In this section we prove Theorem~\ref{th:antic}. It follows from a more general theorem on the absence of two-point concentration for all \textit{$j$-independence} numbers:
\begin{theorem}
\label{tm:anti}
   Fix $\varepsilon > 0$, $k \geq 2$ and $1 \leq j \leq k-1$. Let  $p = p(n)$ satisfy the conditions $p=\omega\left(n^{-(k-1)}\right)$ and \\$p < \left(\left(1-\frac{1}{2^{1/j}}-\varepsilon\right) n^{1-\frac{k}{2} - \frac{k-1}{j}}/2\right)^{2j/(j+2)} \left(\frac{(j+1)(j-1)!(k-1)(k-1-j)! \log (n)}{k+1}\right)^{2/(j+2)}$. \\Set $$\ell = \left(1-\frac{1}{2^{1/j}}-\varepsilon\right) n^{1-\frac{k}{2} - \frac{k-1}{j}}p^{-1/2-1/j} \left((j+1)(j-1)!(k-1)(k-1-j)! \log (d)\right)^{1/j}.$$ Then there exists $q = q(n)$ lying between $p$ and $2p$ such that $\alpha_j(H(n, k, q))$ is not concentrated on $\ell$ values.

\end{theorem}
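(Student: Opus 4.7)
The plan is to follow the Sah--Sawhney technique from \cite[Section 4]{main_paper}. I argue by contradiction: suppose that for every $q$ on a sufficiently fine grid in $[p, 2p]$ the random variable $\alpha_j(H(n,k,q))$ is concentrated in some $\ell$ consecutive values $\{t(q), \ldots, t(q)+\ell-1\}$. Since adding $k$-edges only decreases $\alpha_j$, the function $q \mapsto t(q)$ is weakly decreasing. Applying the Krivelevich--Sudakov asymptotic~\eqref{asymptotic_base} at $p$ and at $2p$ yields
$$\alpha_j(H(n,k,p)) - \alpha_j(H(n,k,2p)) \sim (1-2^{-1/j})\, n^{1-(k-1)/j}\bigl((j+1)(j-1)!(k-1)(k-1-j)!\log d\bigr)^{1/j} p^{-1/j},$$
so $t(p) - t(2p)$ has the same asymptotic order.

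The main ingredient is to bound the one-step drop $t(q) - t(q+\delta)$ for a small increment $\delta \asymp \sqrt{p}\, n^{-k/2}$ (proportional to $\sqrt{p/\binom{n}{k}}$). Coupling $H(n,k,q) \subseteq H(n,k,q+\delta)$ by adding each absent $k$-edge independently with probability $\delta/(1-q)$, and then conditioning on the total edge count $M$, the conditional law of either $H(n,k,q)$ or $H(n,k,q+\delta)$ given $M=m$ is precisely the uniform model on $k$-uniform hypergraphs with $m$ edges. A local central limit theorem (or a Berry--Esseen bound) applied to $M(q) \sim \mathrm{Bin}(\binom{n}{k}, q)$ and $M(q+\delta)$ shows that these two distributions place a positive fraction of mass on a common interval $I$ of edge counts of length comparable to $\sqrt{\binom{n}{k}\, q}$. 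For any $m \in I$ on which concentration holds simultaneously at $q$ and $q+\delta$, the independence number of the common uniform hypergraph on $m$ edges must lie in both $\{t(q),\ldots,t(q)+\ell-1\}$ and $\{t(q+\delta),\ldots,t(q+\delta)+\ell-1\}$, forcing these intervals to intersect. Combined with the monotonicity of $t$, this yields $t(q) - t(q+\delta) \leq \ell - 1$.

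Summing over the $\lfloor p/\delta \rfloor \asymp \sqrt{p}\, n^{k/2}$ steps covering $[p, 2p]$ gives $t(p) - t(2p) \lesssim \ell \cdot \sqrt{p}\, n^{k/2}$, and combining with the lower bound from the first paragraph forces
$$\ell \gtrsim (1-2^{-1/j})\, n^{1-k/2-(k-1)/j}\, p^{-1/j-1/2}\,\bigl((j+1)(j-1)!(k-1)(k-1-j)!\log d\bigr)^{1/j}.$$
This contradicts the hypothesis on $\ell$, which is strictly smaller by the factor $(1-2^{-1/j}-\varepsilon)$. Hence for some $q \in [p, 2p]$ of the form $p+i\delta$ the variable $\alpha_j(H(n,k,q))$ fails to be concentrated in $\ell$ values.

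The main obstacle will be making the overlap argument quantitatively precise: one needs a local CLT (or Berry--Esseen bound) sharp enough to show that $M(q)$ and $M(q+\delta)$ overlap with nontrivial mass on a common interval for the specific $\delta$ chosen, and one must check that $\delta \ll p$ and that the number of steps in $[p, 2p]$ is at least one. The upper bound on $p$ in the theorem is precisely what ensures $\ell \geq 2$ and that the Sah--Sawhney cascade closes in the regime of interest, so provided these quantitative estimates are executed carefully the argument proceeds along the template of \cite[Section 4]{main_paper} without essential modification.
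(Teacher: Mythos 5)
Your proposal follows essentially the same Sah--Sawhney cascade argument as the paper: identical step size $\delta \asymp n^{-k/2}\sqrt{p}$, the same number $\asymp n^{k/2}\sqrt{p}$ of steps, overlap of binomial edge-count distributions under normal approximation combined with monotonicity of $\alpha_j$ under edge addition, and a contradiction against the Krivelevich--Sudakov asymptotic~\eqref{asymptotic_base}. The paper's Lemma~\ref{lm:segm_division} makes the one-step overlap precise by choosing a median threshold $a$ for $\alpha_j$ in the uniform model with $m$ edges (rather than speaking of ``the independence number of the common uniform hypergraph on $m$ edges,'' which as written does not quite parse), but this is a matter of presentation and your argument closes in the same way.
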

\begin{remark}
The upper bound on $p$ is chosen such that the corresponding $\ell \geq 2$. Thus, in this range two-point concentration does not hold in general. Moreover, for $p = O\left(n^{-k+\frac{2j+2}{j+2}}\right)$, the order $\ell$ is already equal to $\Omega(\log^{1/j} (d(n, p)))$.
\end{remark}
\begin{proof}
    
For $p$ in the specified range we define
$$p' = p+n^{-k/2}\sqrt{p}.$$
We first observe that this choice of $p'$
is close enough to $p$ to ensure that there is no ``separation''
of the intervals over which $\alpha_j(H(n, k, p))$ and $\alpha_j(H(n, k, p'))$ are respectively concentrated:
\begin{lemma}
\label{lm:segm_division}
    If $\omega\left(n^{-\frac{1}{k-1}}\right) < p < o(1)$ then there is a sequence $a=a(n)$ such that
    $$\mathbb{P} [\alpha_j(H(n, k, p)) \leq a] > \frac{1}{20} \quad \text{and} \quad \mathbb{P} [\alpha_j(H(n, k, p')) \geq a] > \frac{1}{20}$$
   for $n$ sufficiently large.
\end{lemma}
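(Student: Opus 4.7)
План доказательства следует подходу А.~Саха и М.~Соуни из упомянутой ранее статьи~\cite{main_paper}: мы пользуемся тем, что число рёбер в $H(n,k,p)$ распределено приближённо нормально, а $\alpha_j$ монотонно не возрастает по включению рёбер. Положим $N := {n\choose k}$, и пусть $M\sim\mathrm{Bin}(N,p)$, $M'\sim\mathrm{Bin}(N,p')$ --- числа рёбер в $H(n,k,p)$ и $H(n,k,p')$ соответственно. Условно на $|E|=m$ оба случайных гиперграфа равнораспределены с $H(n,k,m)$, поэтому $F_p(a) := \mathbb{P}[\alpha_j(H(n,k,p)) \leq a] = \mathbb{E}[g(M)]$ и аналогично $F_{p'}(a) = \mathbb{E}[g(M')]$, где $g(m) := \mathbb{P}[\alpha_j(H(n,k,m)) \leq a]$ не убывает по $m$, поскольку событие $\{\alpha_j \leq a\}$ монотонно возрастает по включению рёбер.

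Выберем $a := \min\{b \in \mathbb{Z} : F_p(b) \geq 1/20\}$: первое неравенство леммы выполнено по построению, а из минимальности $a$ следует $F_p(a-1) < 1/20$. Для второго неравенства достаточно проверить $F_{p'}(a-1) < 19/20$, что в силу хорошо известного неравенства $\mathbb{E}[g(M')] - \mathbb{E}[g(M)] \leq D$ для неубывающих $g\colon \mathbb{Z}\to[0,1]$ (где $D := \sup_t |F_M(t) - F_{M'}(t)|$ --- колмогоровское расстояние) сводится к оценке $D < 18/20$. Указанное неравенство выводится суммированием по частям: так как $M\leq_{st} M'$, разность $F_M - F_{M'}$ неотрицательна, и $\mathbb{E}[g(M')] - \mathbb{E}[g(M)] = \sum_m (F_M(m-1)-F_{M'}(m-1))\,\Delta g(m) \leq D \cdot (\sup g - \inf g) \leq D$.

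Остаётся оценить $D$ нормальной аппроксимацией (например, неравенством Берри--Эссеена, применимым благодаря $Np\to\infty$, что гарантировано условием на $p$). Имеем $\mu'-\mu = N(p'-p) = (1+o(1))\,n^{k/2}\sqrt{p}/k!$ и $\sigma := \sqrt{Np(1-p)} = (1+o(1))\,n^{k/2}\sqrt{p}/\sqrt{k!}$, так что $c := (\mu'-\mu)/\sigma \to 1/\sqrt{k!}$. Поскольку функция $x\mapsto \Phi(x) - \Phi(x-c)$ достигает максимума $2\Phi(c/2)-1$ в точке $x=c/2$ (а $\sigma\sim\sigma'$, так что расхождение знаменателей даёт лишь $o(1)$), получаем $D \leq 2\Phi(c/2) - 1 + o(1) \leq 2\Phi\bigl(1/(2\sqrt{2})\bigr) - 1 + o(1) \approx 0{,}28$ для всех $k\geq 2$, что значительно меньше $18/20$.

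Главное препятствие --- аккуратная равномерная по $t$ нормальная аппроксимация с учётом дискретности биномиальных распределений $M$, $M'$ и зависимости их параметров от $n$. Впрочем, это стандартная техника: условие $Np\to\infty$ гарантирует, что остаток в неравенстве Берри--Эссеена и поправка на дискретность вместе дают лишь $o(1)$, не влияя на итоговую оценку.
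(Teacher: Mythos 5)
Your proof is correct and rests on the same three pillars as the paper's --- conditional uniformity of $H(n,k,p)$ given the edge count, normal approximation of the binomial edge-count distribution, and stochastic monotonicity of $\alpha_j$ in the number of edges --- but implements them via a different decomposition. The paper chooses a single threshold $m$ with $\mathbb{P}[e(H(n,k,p))>m]>1/10$ and $\mathbb{P}[e(H(n,k,p'))<m]>1/10$, takes $a$ to be a median of $\alpha_j(H(n,k,m))$ in the uniform model, and reads both inequalities off immediately by conditioning on $\{e>m\}$ (resp.\ $\{e<m\}$), so the final bound is just $\tfrac{1}{2}\cdot\tfrac{1}{10}=\tfrac{1}{20}$. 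You instead define $a$ as a $1/20$-quantile of $\alpha_j(H(n,k,p))$ itself and transfer to $p'$ via the Abel-summation inequality $\mathbb{E}[g(M')]-\mathbb{E}[g(M)]\le D$ for monotone $g\colon\mathbb{Z}\to[0,1]$, reducing the claim to a quantitative bound on the Kolmogorov distance $D$ between the two binomial laws, which you estimate as $D\le 2\Phi\bigl(1/(2\sqrt{k!})\bigr)-1+o(1)<0.3$. Both routes are valid; the paper's is shorter because the median choice makes the arithmetic immediate, while yours is self-contained in the sense that the choice of $a$ depends only on the $p$-distribution, at the cost of the summation-by-parts lemma and the explicit maximization of $\Phi(x)-\Phi(x-c)$.
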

\begin{proof}
As well known, the number of edges in a random hypergraph $e(H(n, k, p))$ has the binomial distribution ${\sf Bin}({n \choose k}, p)$. The key observation is that for large $n$ the distributions of $e(H(n, k, p))$ and $e(H(n, k, p'))$ are approximately Gaussian with equal variances and with means that are $\frac{1}{\sqrt{k!}}$ standard deviation apart.
 Therefore, there is some value $m$ such that
    $$\mathbb{P} [e(H(n, k, p)) > m] > \frac{1}{10} \quad \text{и} \quad \mathbb{P} [e(H(n, k, p')) < m] > \frac{1}{10}.$$
    Then we take $a$ as the median value of $j$-independence number of the random hypergraph in the uniform model with $m$ edges. Since adding edges to a hypergraph does not increase independence numbers, the lemma is proved.
    
\end{proof}

Now, let us proceed to the proof of Theorem~\ref{tm:anti}.
 Assume the contrary: for sufficiently large $n$ and  $p \leq q \leq 2p$ there exists an interval $I$ of length at most $\ell$ such that

$$\mathbb{P} [\alpha_j(H(n, k, p)) \in I] > \frac{49}{50}.$$
Then consider the sequence $p_0, p_1, p_2, \ldots$, where $p_0 = p$ and $p_{i+1} = p_i'$ for $i \geq 0$. Let $z$ be the lowest index such that $p_z \geq 2p$; it is easy to see that $z \leq n^{k/2}\sqrt{p}$. Let $I_0 = [a_0, b_0], I_1 = [a_1, b_1], \ldots, I_{z-1} = [a_{z-1}, b_{z-1}]$ be intervals of $\ell$ integer values such that $\alpha_j(H(n, k, p_i))$ lies in $I_i$ with probability at least $\frac{49}{50}$ for each $i$. By Lemma~\ref{lm:segm_division} we have $a_{i} \leq b_{i+1}$ for all $i < z$. This implies $b_{i+1} \geq b_i - \ell$. Iterating this observation gives
\begin{align*}
    b_z &\geq b_0 - z\ell \\
    &\geq b_0 - \left(1-\frac{1}{2^{1/j}}-\varepsilon\right) n^{1-\frac{k-1}{j}} \left((j+1)(j-1)!(k-1)(k-1-j)!\frac{\log (np^{1/(k-1)})}{p}\right)^{1/j}.
\end{align*}

On the other hand, using the asymptotic relation for the independence number in~\eqref{asymptotic_base}, we have
\begin{align*}
    b_0 - b_z &= (1+o(1)) n^{1-\frac{k-1}{j}} \left((j+1)(j-1)!(k-1)(k-1-j)!\frac{\log (np^{1/(k-1)})}{p}\right)^{1/j} - \\
    & -(1+o(1)) n^{1-\frac{k-1}{j}} \left((j+1)(j-1)!(k-1)(k-1-j)!\frac{\log \left(n(2p)^{1/(k-1)}\right)}{2p}\right)^{1/j} \\
    &= \left(1-\frac{1}{2^{1/j}}+o(1)\right) n^{1-\frac{k-1}{j}} \left((j+1)(j-1)!(k-1)(k-1-j)!\frac{\log (np^{1/(k-1)})}{p}\right)^{1/j}.
\end{align*}
$$$$

This is a contradiction with the upper bound above for $b_0-b_z$.
\end{proof}

\section{Conclusion}
\label{sc:further}

It is clear from the proof of Theorem~\ref{th:main_res} that the lower bound on $p$ can be relaxed to $n^{-\frac{(k-1)k}{(k+1)}} \log^{\beta} (n)$ for some $\beta > 0$ which would be closer to the lower bound obtained in Theorem~\ref{th:antic}. To achieve this, one would only need to change the value of the ``cut-off functions'' in the definition of $s_x$ and $s_z$. This was not done only due to the increased number of technical calculations.

In the future, it would be nice to prove a similar result concerning the concentration of $j$-independence numbers for $1 \leq j \leq k-2$, using the same augmentation set technology. In this case, however, it will be necessary to consider edges containing vertices outside the potential independent sets which significantly complicates the calculation of the second moment. We believe that the following result holds true:
\begin{hypoth}
    Fix $k\geq 3$ and $1 \leq j \leq k-2$. Let $p=p(n) \geq n^{1-\frac{k}{2}-\frac{k-1}{j}+\varepsilon}$ for some $\varepsilon > 0$. Then $\alpha_{j}(H(n, k, p))$ is concentrated on two values.
\end{hypoth}

It would also be interesting to obtain our result on the asymptotic behavior of the independence number --- Theorem~\ref{th:asympt} --- using the technology of A. Frieze in~\cite{frieze_asympt}, who applied Azuma's inequality to random graphs in a surprising way.

\section{Acknowledgments}
The author thanks Dmitry Alexandrovich Shabanov for posing the problem.

\end{document}